\newtheorem*{theorem*}{Theorem}
\newtheorem{theorem}{Theorem}[section]
\newtheorem{lemma}[theorem]{Lemma}
\newtheorem*{lemma*}{Lemma}
\newtheorem{proposition}[theorem]{Proposition}
\newtheorem{conjecture}[theorem]{Conjecture}
\newtheorem{cor}[theorem]{Corollary}
\theoremstyle{definition} \newtheorem{remark}[theorem]{Remark}
\newtheorem{defin}[theorem]{Definition}
\DeclareMathOperator{\proj}{proj}
\def\rr{{\mathbb R}}
\def\R{{\mathbb R}}
\def\su{\subset}
\def\al{\alpha}
\def\ga{\gamma}
\def\de{\delta}
\def\De{\Delta}
\def\Om{\Omega}
\def\om{\omega}
\def\la{\lambda}
\def\ep{\varepsilon}
\def\ti{\widetilde}
\def\diam{{\rm diam}\, }
\def\dim{{\rm dim}\, }
\def\hau{\mathcal{H}}
\def\net{\mathcal{N}}
\def\calc{\mathcal{C}}
\def\cala{\mathcal{A}}
\def\calh{\mathcal{H}}
\def\ih{\nu}
\def\ib{\beta}
\def\lkb{\lesssim}
\def\gkb{\gtrsim}
\def\phi{\varphi}
\providecommand{\semmi}[1]{}
\begin{document}

\title[Dimension of Furstenberg-type subsets of affine subspaces ]{Hausdorff dimension of Furstenberg-type sets associated to families of affine subspaces }
\author{K. H\'era}

\address
{Institute of Mathematics, E\"otv\"os Lor\'and University, 
P\'az\-m\'any P\'e\-ter s\'et\'any 1/c, H-1117 Budapest, Hungary}

\email{herakornelia@gmail.com}

\keywords{Hausdorff dimension, affine subspaces, Furstenberg sets}

\subjclass[2010]{28A78, 05B30}

\thanks{This research was supported  
by the 
Hungarian National Research, Development and Innovation Office - NKFIH, 124749, and 
by the \'UNKP-17-3 New National Excellence Program as well as the ELTE Institutional Excellence Program (1783-3/2018/FEKUTSRAT) 
of the Hungarian Ministry of Human Capacities.
} 

\begin{abstract}
We show that if $B \su \rr^n$ and $E \su A(n,k)$ is a nonempty collection 
of $k$-dimensional affine subspaces of $\R^n$ such that every
$P \in E$ intersects $B$ in a set of Hausdorff dimension at least $\al$ with $k-1 < \al \leq k$,
then 
$\dim B \ge  \al +\dim E/(k+1)$, where $\dim$ denotes 
the Hausdorff dimension. 
This estimate generalizes the well known Furstenberg-type estimate 
that every $\al$-Furstenberg set in the plane has Hausdorff dimension at least $\al + 1/2$. 

More generally, we prove that if $B$ and $E$ are as above with $0 < \al \leq k$, 
then 
$\dim B \ge  \al +(\dim E-(k-\lceil \al \rceil)(n-k))/(\lceil \al \rceil+1)$. 
We also show that this bound is sharp for some parameters. 

As a consequence, we prove that for any $1 \leq k<n$, the union of any nonempty 
$s$-Hausdorff dimensional family of $k$-dimensional affine subspaces 
of $\R^n$ has
Hausdorff dimension at least $k+\frac{s}{k+1}$. 
\end{abstract}

\maketitle

\section{Introduction and statements of the main results}
\label{introd}
The following question arose from the work of Furstenberg \cite{Fu}.
Fix $0 < \al \leq 1$, and suppose that $F \su \rr^2$ is a compact set such that for
every $e \in S^1$ there is a line $L_e$ with direction $e$ such that 
$\dim (L_e \cap F) \geq \al$, where $\dim$ denotes the Hausdorff dimension. What is the smallest possible value of $\dim F$? 
Such sets are called $\al$-Furstenberg-sets. 
Wolff \cite{Wo99} gave the following partial answers to the question: 
For any $0 < \al \leq 1$, if $F \su \rr^2$ is an $\al$-Furstenberg set, then $\dim F \geq 2 \al,$ and $\dim F \geq \al + \frac{1}{2}$.
Moreover, for any $0 < \al \leq 1$ there exists a Furstenberg set with $\dim F = \frac{3\al}{2} + \frac{1}{2}$. 
In the $\al=1/2$ case Bourgain \cite{Bo03} improved the lower bound $1$ to $\dim F \geq 1 + c$ for some absolute constant $c > 0$ using the work
of Katz and Tao \cite{KaTa}.
However, the smallest possible value of the Hausdorff dimension of Furstenberg-sets is still unknown. 

Molter and Rela \cite{MR} considered the problem in higher generality: 
Let $0 < \al \leq 1$, $0 < s \leq 1$. 
We say that $F\su\R^2$ is an $(\al,s)$-Furstenberg set, if there is $E\su S^1$ with $\dim E=s$ such that for every $e\in E$ there is a line 
$L_e$ with direction $e$ with
$\dim (L_e \cap F )\geq \al$. In \cite{MR} it was proved that if 
$F\su\R^2$ is an $(\al,s)$-Furstenberg set, then $\dim F\ge 2\al-1+s$ and 
$\dim F\ge \al+\frac{s}{2}$.

In \cite{LuSt} Lutz and Stull investigated the generalized Furstenberg-problem using methods from information theory. 
They proved that if 
$F\su\R^2$ is an $(\al,s)$-Furstenberg set, then
$\dim F\ge \al+\min\{s,\al\}$. Their new bound is better than the one obtained in \cite{MR} whenever $\al, s <1$ and $s < 2\al$.

In \cite{HKM} the authors investigated Furstenberg-type sets associated to families of affine subspaces.
For any integers $1 \leq  k < n$, let $A(n, k)$ denote the space
of all $k$-dimensional affine subspaces of $\rr^n$. Let $0 < \alpha \leq k$, and $0 \leq s \leq (k+1)(n-k)$. 
We say that $B \su \rr^n$ is an $(\al,k,s)$-Furstenberg set, if there is $\emptyset \neq E \su A(n,k)$ with $\dim E = s$ such that $B$ has an at least $\al$-dimensional intersection with each $k$-dimensional affine subspace of the family $E$, that is, $\dim (B \cap P) \geq \al$ for all $P \in E$. What is the smallest possible value of $\dim B$ (as a function of $\al,  s, n, k$)? In \cite{HKM} it was proved that if
$B \su \rr^n$ is an $(\al,k,s)$-Furstenberg set, then 
$\dim B \geq 2\al-k+ \min\{s,1\}.$ The method used in \cite{HKM} generalizes the method of Wolff \cite{Wo99} yielding the lower bound $2 \al$ for classical plane $\al$-Furstenberg-sets. 

In this paper we also investigate Furstenberg-type sets associated to families of affine subspaces. Our method generalizes the method of Wolff \cite{Wo99} yielding the lower bound $\al + \frac{1}{2}$ for classical plane $\al$-Furstenberg-sets. 

The paper is organized as follows: In Section \ref{mainn} we state our main result (Theorem \ref{thm2}), and prove that the obtained bound for the Hausdorff dimension of $(\al,k,s)$-Furstenberg sets is sharp for some parameters. 
In Section \ref{coraffin} we list some results obtained for unions of affine subspaces. 
Section \ref{bas} contains the introductory steps, and Section \ref{count} contains the main arguments of the proof of our main result. The lengthy proofs of two important lemmas (Lemma \ref{pconst}, Lemma \ref{metricnet}) are postponed to Sections \ref{psepar} and \ref{linalg}. Section \ref{pure} contains the proofs of some relatively easy purely geometrical lemmas. 

\subsection{Notation and definitions}
The open ball of center $x$ and radius $r$ will be denoted by $B(x,r)$ or $B_{\rho}(x,r)$ if we want to indicate the metric $\rho$. 
For a set $U \su \rr^n$, $U_{\de}=\cup_{x \in U} B(x,\de)$ denotes the open $\de$-neighborhood of $U$, and $\diam (U)$ denotes the diameter of $U$. 
Let $s \geq  0$, $\de \in (0,\infty]$ and $A \su \rr^n$. By the $s$-dimensional Hausdorff $\de$-premeasure of $A$ we mean  
$$\hau^s_{\de}(A)= \inf \{\sum_{i=1}^{\infty} (\diam(U_i))^s : A \su \bigcup_{i=1}^{\infty} U_i, \ \diam(U_i) \leq \de \ (i=1,2,\dots)\}.$$ 
The $s$-dimensional Hausdorff measure of $A$ is defined as $\hau^s(A)=\lim_{\de \to 0} \hau^s_{\de}(A)$, and  
the $s$-dimensional Hausdorff content of $A$ is    
$$\hau^s_{\infty}(A)=\inf \{ \sum_{i=1}^{\infty} (\diam(U_i))^s : A \su \bigcup_{i=1}^{\infty} U_i\}.$$  
The Hausdorff dimension of $A$ is defined as 
$$\dim A=\sup\{s: \hau^s(X)>0\}=\sup\{s: \hau^s_{\infty}(X)>0\}.$$ 
For the well known properties of Hausdorff measures and dimension, see e.g. \cite{Fa}. 
For a finite set $A$, let $|A|$ denote its cardinality. 
We will use the notation $a \lesssim_{\alpha} b$ if $a \leq Cb$ where $C$ is a constant depending on $\alpha$. 
If it is clear from the context what $C$ should depend on, we may write only $a \lesssim b$. 
For any $x \in \rr$, the least integer greater than or equal to $x$ will be denoted by $\lceil x \rceil$. 
For $i \geq 2$ integer and $z_1, \dots, z_i \in \rr^n$, let $\De(z_1, \dots, z_i)$ denote the convex hull of the points $z_1, \dots, z_i$.

Let $1 \leq  k < n$ be integers, and let $A(n, k)$ denote the space
of all $k$-dimensional affine subspaces of $\rr^n$. 
Now we introduce the concept of natural metrics on $A(n,k)$. 
Let $G(n,k)$ denote the space of all $k$-dimensional linear subspaces of $\rr^n$. 
For $P_i=V_i + a_i \in A(n,k)$, where $V_i \in G(n,k)$ and $a_i \in V_i^{\perp}$, $i=1,2$, we put 
$$m(P_1,P_2)=\|\pi_{V_1}-\pi_{V_2}\| + |a_1-a_2|,$$ 
where $\pi_{V_i}: \rr^n \to V_i$ denotes the orthogonal projection onto $V_i$ ($i=1,2)$, and $\| \cdot \|$ denotes the standard operator norm. 
Then $m$ is a metric on $A(n,k)$, see \cite[p. 53]{Ma95}.  

\begin{defin}
Let $\rho$ be a metric on $A(n,k)$. We say that $\rho$ is a \emph{natural metric}, if 
$\rho$ and $m$ are strongly equivalent, that is, if  
there exist positive constants $K_1$ and $K_2$ such that, for every $P_1,P_2 \in A(n,k)$,
$K_1 \cdot m(P_1,P_2)\leq \rho (P_1,P_2) \leq K_2 \cdot m(P_1,P_2).$
\end{defin}

\subsection{The main results and their sharpness} 
\label{mainn}
Let $1 \leq  k < n$ be integers, and fix a natural metric $\rho$ on $A(n, k)$. 

Now we state one of our main results. 
\begin{theorem}
\label{thm1}
Let $k-1 < \al \leq k$, and $0 \leq s \leq (k+1)(n-k)$ be any real numbers.
Suppose that $B \su \rr^n$ is an $(\al,k,s)$-Furstenberg-set, that is, there exists $\emptyset \neq E \su A(n, k)$ with $\dim E = s$ such that for every $k$-dimensional affine subspace
$P \in E$, $\dim (P \cap B) \geq \al$. Then 
\begin{equation}
\label{geq}
\dim B \geq \al+ \frac{s}{k+1}. 
\end{equation}
\end{theorem}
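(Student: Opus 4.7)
The plan is to adapt the classical Wolff argument that gives the $\alpha+\tfrac{1}{2}$ bound for planar $\alpha$-Furstenberg sets to affine $k$-planes in $\rr^n$. First, I would reduce to a discrete incidence problem at a scale $\delta>0$. Using a Frostman-type measure on $E$ and the standard $\delta$-discretization of $\dim(P\cap B)\geq\alpha$, one extracts, up to losses of the form $\delta^{-\ep}$, a $\rho$-separated subfamily $\mathcal{E}_\delta\subset E$ with $|\mathcal{E}_\delta|\gtrsim \delta^{-s}$ such that for every $P\in\mathcal{E}_\delta$ the $\delta$-neighborhood $P_\delta$ contains at least $\delta^{-\alpha}$ $\delta$-balls drawn from a fixed $\delta$-cover of $B$. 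Writing $N$ for the cardinality of that cover, the problem is to prove $N\gtrsim \delta^{-\alpha-s/(k+1)}$.

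The central combinatorial input is the fact that $k+1$ affinely independent points of $\rr^n$ determine a unique plane in $A(n,k)$. Quantitatively, I would seek an estimate of the following type: if $z_1,\dots,z_{k+1}$ are $\delta$-balls whose convex hull $\De(z_1,\dots,z_{k+1})$ has $k$-dimensional volume bounded below by a small power of $\delta$, then the number of $\rho$-separated planes $P\in\mathcal{E}_\delta$ with $z_i\subset P_\delta$ for every $i$ is $O(1)$. Double-counting such non-degenerate $(k{+}1)$-tuples yields
\begin{equation*}
|\mathcal{E}_\delta|\cdot(\delta^{-\alpha})^{k+1} \;\lesssim\; N^{k+1},
\end{equation*}
so $N\gtrsim \delta^{-\alpha}\,|\mathcal{E}_\delta|^{1/(k+1)}$, and the gain of exactly $s/(k+1)$ appears as the $(k{+}1)$-th root of the family size. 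This is the structural reason behind the shape of the bound.

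The main difficulty, and the reason the hypothesis $\alpha>k-1$ is imposed, is the treatment of the degenerate $(k{+}1)$-tuples whose convex hull is nearly contained in a $(k{-}1)$-dimensional affine subspace. Here the assumption becomes crucial: since $\dim(P\cap B)\geq\alpha>k-1$, no $\delta$-neighborhood of a $(k{-}1)$-flat in $P$ can absorb more than a $\delta^{\alpha-(k-1)}$-fraction of the $\delta$-balls in $P\cap B$, so after any $k$ points are fixed in $P$ a definite proportion of the remaining $\delta$-balls lie robustly off their affine span. Iterating this pigeonhole should let one restrict attention to $(k{+}1)$-tuples whose simplices $\De(z_1,\dots,z_{k+1})$ are non-degenerate at some fixed power of $\delta$; I expect this to be the content of Lemma \ref{pconst}. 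The quantitative claim that ``a non-degenerate $(k{+}1)$-tuple of $\delta$-balls lies in $P_\delta$ for at most boundedly many $\rho$-separated $P$'' should in turn follow from a Lipschitz comparison between the natural metric $m$ on $A(n,k)$ and a coordinate description of planes through a fixed simplex, which I anticipate is the role of Lemma \ref{metricnet}. The purely geometric statements of Section \ref{pure} presumably provide the volume lower bound on $\De(z_1,\dots,z_{k+1})$ once transversality is secured.

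With these two lemmas in hand, double counting yields $N\gtrsim\delta^{-(\alpha+s/(k+1))+O(\ep)}$ uniformly in $\delta$. Converting from $\delta$-cover bounds to Hausdorff dimension via the $\hau^t_\infty$-content formulation completes the argument: for every $t<\alpha+s/(k+1)$ one verifies $\hau^t_\infty(B)>0$, and sending $\ep\to 0$ recovers \eqref{geq}. The real work is thus concentrated in the incidence/separation lemmas underpinning the $(k{+}1)$-point count; everything else is multiscale pigeonholing and the routine dimension-to-cover conversion.
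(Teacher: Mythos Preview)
Your proposal is correct and follows essentially the same route as the paper: the paper actually proves the more general Theorem~\ref{thm2} with $m=\lceil\alpha\rceil$, and in the case $m=k$ this reduces exactly to your $(k{+}1)$-tuple double count, with Lemma~\ref{pconst} producing the non-degenerate simplices inside each $P$ (via the $\alpha>k-1$ mechanism you describe) and Lemma~\ref{metricnet} bounding the number of $\delta$-separated $k$-planes passing near a fixed non-degenerate $(k{+}1)$-tuple of $\delta$-cubes. The sub-polynomial losses you absorb into $\delta^{O(\ep)}$ correspond to the explicit $\lambda=1/l^2$ factors in the paper, and your anticipated roles for the lemmas of Sections~\ref{psepar}--\ref{pure} are accurate.
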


\begin{remark}
\label{saf}
The following easy example demonstrates that Theorem \ref{thm1} can not hold if $\al \leq k-1$:
 
Let $\al \leq k-1$, and let $B$ be an $\al$-dimensional subset of a fixed $(k-1)$-dimensional affine subspace $V$. Take an $s$-dimensional family $E$ of $k$-dimensional affine subspaces containing $V$ such that $s>0$. Then $\dim (P \cap B) = \al$ for all $P$, and $\dim B = \al < \al + \frac{s}{k+1}$. 
\end{remark}

In the case of arbitrary $\al$, we prove the following.  
\begin{theorem}
\label{thm2}
Let $0 < \al \leq k$, and $0 \leq s \leq (k+1)(n-k)$ be any real numbers.  
Suppose that $B \su \rr^n$ is an $(\al,k,s)$-Furstenberg-set, that is, there exists $\emptyset \neq E \su A(n, k)$ with $\dim E = s$ such that for every $k$-dimensional affine subspace
$P \in E$, $\dim (P \cap B) \geq \al$. Then 
\begin{equation}
\label{smmm}
\dim B \geq  \al + \frac{s-(k-\lceil \al \rceil)(n-k)}{\lceil \al \rceil+1}.
\end{equation}

\end{theorem}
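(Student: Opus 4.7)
The plan is to generalize Wolff's ``two-point'' argument for planar $\al$-Furstenberg sets by replacing pairs with $(k'+1)$-tuples, where $k' := \lceil \al \rceil$. The choice is forced by the geometry: on each $P \in E$, the set $P \cap B$ has dimension strictly greater than $k'-1$, so it cannot concentrate on a $(k'-1)$-dimensional affine subspace of $P$, and a generic $(k'+1)$-tuple from it spans a $k'$-dimensional affine subspace of $\rr^n$. The family of $k$-dimensional affine subspaces of $\rr^n$ containing a fixed $k'$-dimensional one has dimension $(k-k')(n-k)$, which is exactly the deficit appearing in the numerator of \eqref{smmm}.

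After a standard discretization (presumably carried out in Section \ref{bas}), I would fix $\tau$ strictly less than the right-hand side of \eqref{smmm} and argue by contradiction that for arbitrarily small $\de>0$, $B$ cannot be covered by fewer than $\de^{-\tau}$ balls of radius $\de$ (up to constants and logarithmic losses). Using $\hau^s_\infty(E)>0$ and Frostman-type pigeonholing on dyadic scales, one extracts a $\de$-separated subfamily $\mathcal{E}_\de \su E$ of cardinality $\gtrsim \de^{-s}$ together with, for each $P \in \mathcal{E}_\de$, a $\de$-separated set $X_P \su P$ of cardinality $\gtrsim \de^{-\al}$ whose points lie in the $\de$-neighborhood of $B$. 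In the main counting step (Section \ref{count}) one estimates the number of configurations $(P, (x_0,\dots,x_{k'}))$ with $P \in \mathcal{E}_\de$ and $x_0,\dots,x_{k'} \in X_P$ in two ways. The lower count is $\gtrsim \de^{-s-\al(k'+1)}$; for the upper count, each such configuration is charged to a $(k'+1)$-tuple of $\de$-balls covering $B$ and to the $\de$-neighborhood in $A(n,k)$ of the $(k-k')(n-k)$-dimensional submanifold consisting of $k$-dimensional affine subspaces containing the affine hull of the tuple, yielding at most $N^{k'+1} \cdot \de^{-(k-k')(n-k)}$, where $N$ is the $\de$-covering number of $B$. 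Comparing gives $N \gtrsim \de^{-\al - (s - (k-k')(n-k))/(k'+1)}$, which is the required bound.

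The two main obstacles are exactly what Lemmas \ref{pconst} and \ref{metricnet} are set up to handle. The first is a quantitative non-concentration statement: a positive proportion of $(k'+1)$-tuples drawn from $X_P$ must be \emph{quantitatively generic}, i.e.\ bounded away from every $(k'-1)$-dimensional affine subspace by a fixed polynomial factor in $\de$. Morally this follows from $\al > k'-1$, but establishing it uniformly at a single scale requires genuine work and is presumably the content of Lemma \ref{pconst}. The second obstacle is the Grassmannian incidence estimate: given a quantitatively generic $(k'+1)$-tuple of $\de$-balls in $\rr^n$, the set of $k$-dimensional affine subspaces passing $\de$-close to all of them must be contained in the $\de$-neighborhood (in any fixed natural metric on $A(n,k)$) of the relevant $(k-k')(n-k)$-dimensional submanifold, with constants uniform in the base tuple. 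This is ultimately a linear-algebra computation about the flag $k' \su k$ and the metric on $A(n,k)$, and it is the main reason Lemma \ref{metricnet} is postponed to a dedicated section.
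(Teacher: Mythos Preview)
Your proposal is correct and matches the paper's approach closely: the paper sets $m=\lceil\al\rceil$, extracts a $\de$-separated family of $\gtrsim\la\de^{-s}$ planes (with $\la$ a logarithmic factor), and double-counts tuples $(j_0,\dots,j_m,i)$ exactly as you describe, with Lemma~\ref{pconst} supplying the quantitative non-concentration and Lemma~\ref{metricnet} the Grassmannian net estimate.

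Two points where your sketch is slightly off and where the paper's execution differs from what you wrote. First, the radius in Lemma~\ref{metricnet} is not $\de$ but $K\de/\hau^m(\De)$, where $\De$ is the $m$-simplex spanned by the (projections of the) tuple; the constants are \emph{not} uniform in the base tuple, and this is precisely why the quantitative genericity from Lemma~\ref{pconst} is needed in the upper count---it supplies the lower bound $\hau^m(\De)\gtrsim\la^{\phi}$ (a logarithmic, not polynomial, factor in $\de$), which in turn controls how many $\de$-separated planes fit in the net. Second, Lemma~\ref{pconst} is organized differently from ``a positive proportion of tuples are generic'': for each $P$ it produces $m+1$ disjoint pieces $T_0(P),\dots,T_m(P)\subset P$, each of $\al$-content $\gtrsim\la^{\psi}$, such that \emph{every} transversal $(z_0,\dots,z_m)$ with $z_r\in\proj_0(T_r(P))$ spans a simplex of $m$-measure $\gtrsim\la^{\phi}$. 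This structural form is what lets the lower count factor cleanly as $M\cdot\prod_r|A_r|$ while simultaneously guaranteeing the simplex bound needed in the upper count.
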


\begin{remark}
Note that Theorem \ref{thm2} implies Theorem \ref{thm1}, by $\lceil \al \rceil=k$ if $k-1 < \al \leq k$. 
\end{remark}

We claim that both Theorem \ref{thm1} and Theorem \ref{thm2} are sharp for some parameters. 
Namely, for any $0 < \al \leq k$, there exist families of affine subspaces $E_1, E_2 \su A(n,k)$, and generalized Furstenberg-sets 
$B_1, B_2 \su \rr^n$ associated to them such that 
$\dim B_1, \dim B_2$ equals the lower bound obtained from Theorem \ref{thm1} and Theorem \ref{thm2}, respectively. This is the content of the following two propositions. 

\begin{proposition}
\label{sthm1}
Let $0 < \al \leq k$ be any real number, and $m \in [0,n-k]$ integer. There exists an $(\al,k,s)$-Furstenberg-set $B \su \rr^n$ with $s= m(k+1)$ 
such that $\dim B = \al + m =  \al +  \frac{s}{k+1}$. 
\end{proposition}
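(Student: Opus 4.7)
The plan is to construct $B$ explicitly as a product inside a $(k+m)$-dimensional coordinate subspace. Fix an orthogonal decomposition $\rr^n = \rr^k \times \rr^m \times \rr^{n-k-m}$, and let $W = \rr^k \times \rr^m \times \{0\}$. I would choose any compact set $F \su [0,1]^k$ with $\dim F = \al$ and $\hau^\al(F) > 0$ (a standard self-similar Cantor-type construction supplies one for every $0 < \al \leq k$) and set $B := F \times [0,1]^m \times \{0\} \su W$. By the product formula for Hausdorff dimension (using that $[0,1]^m$ has packing dimension $m$, see Falconer), one gets $\dim B = \dim F + m = \al + m$.

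For the family, for each linear map $L\colon \rr^k \to \rr^m$ with $\|L\| \leq \ep$ and each $y_0 \in [1/3, 2/3]^m$, I would consider the $k$-plane $P(L,y_0) := \{(x,\, y_0+Lx,\, 0) : x \in \rr^k\} \su W$, and let $E$ be the collection of all such $P(L,y_0)$. Choosing $\ep$ small enough (say $\ep \sqrt{k} \leq 1/3$) guarantees that $y_0 + Lx \in [0,1]^m$ whenever $x \in F \su [0,1]^k$, so that the graph $\{(x, y_0+Lx, 0) : x \in F\}$ is contained in $P(L,y_0) \cap B$. Since this graph is bi-Lipschitz equivalent to $F$ via projection to the first factor, $\dim(P \cap B) \geq \al$ for every $P \in E$.

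The only genuine technical step is to verify $\dim E = m(k+1)$ with respect to the natural metric on $A(n,k)$. I would show that the parametrizing map $\Phi\colon \{L : \|L\| \leq \ep\} \times [1/3, 2/3]^m \to A(n,k)$, $(L,y_0) \mapsto P(L,y_0)$, is bi-Lipschitz onto its image on this compact parameter box. The direction subspace $V(L) = \{(x,Lx,0) : x \in \rr^k\}$ depends real-analytically on $L$, so the orthogonal projection $\pi_{V(L)}$ is smooth in $L$; the foot of the perpendicular from the origin to $P(L,y_0)$ is smooth in $(L,y_0)$; and injectivity is immediate ($y_0$ is the value of the defining affine function at $x=0$, and $L$ is its slope). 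Since bi-Lipschitz maps preserve Hausdorff dimension and the source box has dimension $km + m$, this gives $\dim E = m(k+1) = s$.

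Putting everything together, $\dim B = \al + m = \al + s/(k+1)$, which is exactly the bound in Theorem~\ref{thm1}, showing its sharpness. The main obstacle is the verification in the third paragraph, and it reduces to checking that the smooth, injective parametrization $\Phi$ is bi-Lipschitz in the natural metric $m$ (equivalently, in $\rho$); everything else is routine product-formula bookkeeping.
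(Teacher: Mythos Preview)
Your proof is correct and takes a genuinely different route from the paper's. The paper's argument is nonconstructive: it starts with an \emph{arbitrary} Borel set $B$ in a $(k+m)$-plane with $0<\hau^{\alpha+m}(B)<\infty$, and then invokes the Marstrand--Mattila slicing theorem to produce a full-measure set of directions $W\in G(k+m,k)$ each admitting an $m$-dimensional set of translates with $\alpha$-dimensional slice; this immediately gives an $m(k+1)$-dimensional family $E$. Your construction is explicit: you take $B=F\times[0,1]^m\times\{0\}$ with a specific $F$, and write down $E$ as a concrete $(km+m)$-parameter family of graphs of small affine maps, verifying the slice condition by a bi-Lipschitz projection and the dimension of $E$ via a bi-Lipschitz chart into $A(n,k)$.

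What each approach buys: the paper's argument is shorter once one is willing to quote the slicing theorem, and it shows that \emph{any} $(\alpha+m)$-set in a $(k+m)$-plane (with positive finite $\hau^{\alpha+m}$-measure) already witnesses sharpness. Your argument is more elementary --- it avoids the slicing theorem entirely and uses only the product formula and a routine bi-Lipschitz computation --- at the cost of being tied to the specific product set $F\times[0,1]^m$. The one step you flag (that $\Phi$ is bi-Lipschitz for the natural metric) is indeed routine: on $\calh$ the ``code'' parametrization $(a^0,b^1,\dots,b^k)$ used later in the paper is strongly equivalent to $\rho$, and your $(L,y_0)$ maps affinely into that code space, so bi-Lipschitzness on the compact parameter box is immediate.
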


\begin{proof}
Let $B \su \rr^n$ be a Borel set contained in a $(k+m)$-dimensional affine subspace $H$ with $0 < \hau^{\beta}(B) < \infty$, where $\beta=\al + m$. 

Then the Marstrand-Mattila slicing theorem \cite{Ma95} implies that for $\gamma_{k+m, k}$-almost all $k$-dimensional linear subspace $W$ of $H$, 
$$\hau^m(\{a \in W^{\perp}: \dim ( B \cap (W + a))=\beta -m =\al \})>0,$$
where  $\gamma_{k+m, k}$ denotes the natural measure on the Grassmannian $G(k+m,k)$. 
This yields an $s=(k+1)m$-dimensional family $E$ of $k$-planes intersecting $B$ in a set of dimension $\al$. 
Thus $B$ is an $(\al,k,s)$-Furstenberg-set, $\dim B = \beta = \al + m = \al +  \frac{s}{k+1}$ and we are done. 
\end{proof}

\begin{proposition}
\label{sthm2}
Let $0 < \al \leq k$ be any real number. There exists an $(\al,k,s)$-Furstenberg-set $B \su \rr^n$ with  $s= (k-\lceil \al \rceil)(n-k)$ such that 
 $\dim B = \al  = \al + \frac{s-(k-\lceil \al \rceil)(n-k)}{\lceil \al \rceil+1}$. 
\end{proposition}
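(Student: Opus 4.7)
The plan is to construct $B$ by embedding an $\alpha$-dimensional set inside a single $j$-dimensional affine subspace and then taking $E$ to be the full family of $k$-planes containing that subspace, where $j=\lceil \alpha \rceil$.

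First I would set $j=\lceil \alpha \rceil$, so $1\leq j \leq k$ and $j-1<\alpha \leq j$. Fix a $j$-dimensional linear subspace $V\subset \rr^n$, and let $B\subset V$ be any compact set with $\dim B=\alpha$ (for instance a suitable self-similar Cantor set in $V\cong \rr^j$; such sets exist for every $0<\alpha\leq j$). Let
\[
E=\{P\in A(n,k): V\subset P\}.
\]
For every $P\in E$ we have $B\subset V\subset P$, hence $B\cap P=B$ and $\dim(B\cap P)=\alpha$, so the only thing left to verify is that $\dim E$ equals $s=(k-\lceil\alpha\rceil)(n-k)$ in the natural metric $\rho$ on $A(n,k)$.

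For the dimension count, I would use the bijection $W\mapsto V\oplus W$ between the Grassmannian $G(V^\perp,k-j)$ of $(k-j)$-dimensional linear subspaces of $V^\perp$ and the family $E$. Since $V\perp W$, the orthogonal projection onto $V\oplus W$ decomposes as $\pi_{V\oplus W}=\pi_V+\pi_W$, and both planes contain the origin so the offsets vanish. Hence, for $P_i=V\oplus W_i$,
\[
m(P_1,P_2)=\|\pi_{P_1}-\pi_{P_2}\|=\|\pi_{W_1}-\pi_{W_2}\|,
\]
which is exactly the standard metric on $G(V^\perp,k-j)\cong G(n-j,k-j)$. Strong equivalence of $\rho$ and $m$ then transfers this to $\rho$, so $E$ is bi-Lipschitz equivalent to $G(n-j,k-j)$, whose Hausdorff dimension is $(k-j)(n-j-(k-j))=(k-j)(n-k)=s$.

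Putting everything together, $B$ is an $(\alpha,k,s)$-Furstenberg-set with $\dim B=\alpha$, and since $s-(k-\lceil\alpha\rceil)(n-k)=0$ the quantity $\alpha+\frac{s-(k-\lceil\alpha\rceil)(n-k)}{\lceil\alpha\rceil+1}$ reduces to $\alpha$, as required. The only step that requires any care is the verification that the natural metric on $A(n,k)$, when restricted to $E$, reproduces the Grassmannian metric on $G(n-j,k-j)$; this is the only place where the specific form of $m$ and the orthogonality $W\subset V^\perp$ are used, and once it is observed the dimension calculation is immediate.
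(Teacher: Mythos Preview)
Your proof is correct and follows essentially the same approach as the paper: place an $\alpha$-dimensional set inside a $\lceil\alpha\rceil$-dimensional subspace $V$ and take $E$ to be all $k$-planes containing $V$. The paper simply asserts that ``easy computation gives that the dimension of $E$ equals $(k-\lceil\alpha\rceil)(n-k)$'' and separates the trivial case $\lceil\alpha\rceil=k$; you instead handle both cases uniformly and supply the computation by exhibiting the isometry (for $m$) between $E$ and $G(n-j,k-j)$ via $W\mapsto V\oplus W$, which is a welcome elaboration rather than a departure.
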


\begin{proof}
If $\al > k-1$, then $\lceil \al \rceil = k$,  $s =0$ so the proposition is trivial. 

If $\al \leq k-1$, let $B \su \rr^n$ be a set of dimension $\al$ contained in an $m=\lceil \al \rceil$-dimensional affine subspace $V$. Take $E$ to be the family of all $k$-dimensional subspaces of $\rr^n$ containing $V$. Easy computation gives that the dimension of $E$ equals $(k-m)(n-k)=(k-\lceil \al \rceil)(n-k)=s$. 
Then $\dim (P \cap B) = \al$ for all $P \in E$, so $B$ is an $(\al,k,s)$-Furstenberg-set with $\dim B = \al$ and we are done. 
\end{proof}

\subsection{Results for unions of affine subspaces}
\label{coraffin}

In this section we investigate unions $B=\bigcup_{P \in E} P \su \rr^n$ of affine subspaces,
where $\emptyset \neq E \subset A(n,k)$. We are interested in the smallest value of $\dim B$ as a function of $\dim E$. 

The first such result for unions of affine subspaces is due to Oberlin. 
\begin{theorem}[Oberlin, \cite{Ob2}]
\label{ober}
Let $\emptyset \neq E \su A(n, k)$ be compact, and put $B=\bigcup_{P \in E} P \su \rr^n$. Then 
$$\begin{cases}
\dim B \geq  2k -k(n- k) + \dim E, & \ \text{if} \ \dim E \leq (k+1)(n-k) -k,\\ 
B \ \text{has positive Lebesgue-measure}, & \ \text{if} \ \dim E > (k+1)(n-k) -k.
\end{cases}$$
\end{theorem}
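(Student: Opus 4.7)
I would attack this potential-theoretically. Fix $s < \dim E$ and, using Frostman's lemma on the compact metric space $(E,\rho)$, produce a nonzero finite Borel measure $\mu$ on $E$ with $\mu(B_\rho(P,r)) \leq r^s$. Pick $R_0$ large enough that $\sigma_P := \hau^k|_{P \cap B(0,R_0)}$ has uniformly positive, bounded mass over $\mathrm{supp}\,\mu$ (by compactness of $E$), and form
$$\nu := \int_E \sigma_P\, d\mu(P),$$
a nonzero finite Borel measure supported on $B \cap B(0,R_0)$. The theorem is reduced to two claims about $\nu$: that $I_t(\nu) < \infty$ for every $t < 2k - k(n-k) + s$ (yielding $\dim B \geq t$, hence the first alternative after $s \nearrow \dim E$), and that $\hat\nu \in L^2(\rr^n)$ once $s > (k+1)(n-k) - k$ (yielding $\nu \ll \leb^n$, hence positive Lebesgue measure of $B$).

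\textbf{Execution.} By Fubini,
$$I_t(\nu) = \iint_{E\times E} I_t(\sigma_{P_1},\sigma_{P_2})\, d\mu(P_1)\, d\mu(P_2),$$
and Plancherel rewrites this as $c_{n,t}\int|\hat\nu(\xi)|^2|\xi|^{t-n}\,d\xi$. Writing each $P$ uniquely as $V(P) + a(P)$ with $V(P) \in G(n,k)$ and $a(P) \in V(P)^\perp$, one has $\widehat{\sigma_P}(\xi) = e^{-2\pi i\xi\cdot a(P)}\,\widehat{\sigma_{V(P)}}(\xi)$, and $|\widehat{\sigma_{V(P)}}(\xi)|$ is essentially the indicator of the Grassmannian slab $|\pi_{V(P)}\xi| \lesssim 1$, i.e.\ of $V(P)$ making angle $\lesssim 1/|\xi|$ with $\xi^\perp$. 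Thus $|\hat\nu(\xi)|$ is controlled by two inputs: the $\mu$-measure of this slab of planes, and oscillatory cancellation from the translation phase $e^{-2\pi i\xi\cdot a(P)}$ integrated along the $a$-fibres of $\mu$. A careful bookkeeping in which the direction coordinate absorbs the Grassmannian codimension of the slab while the translation coordinate contributes the oscillatory gain yields a pointwise bound on $\hat\nu(\xi)$ whose insertion into the Plancherel integral produces convergence precisely at the claimed threshold; the same bound places $\hat\nu$ in $L^2$ once $s$ exceeds $(k+1)(n-k) - k$.

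\textbf{Main obstacle.} The delicate step is the slicing: translating the isotropic Frostman bound on $\mu$ in the mixed metric $\rho$ into the anisotropic estimates required on the Grassmannian and translation components separately. This forces one to disintegrate $\mu$ along the fibres of $A(n,k) \to G(n,k)$ and to pick a local chart on $G(n,k)$ that is well-adapted to $\rho$ --- for instance the one coming from the $\|\pi_{V_1}-\pi_{V_2}\|$ part of the operator-norm metric $m$, where the slab near $\xi^\perp$ has a clean description. It is also where the combinatorial exponent $2k - k(n-k)$ materializes: two factors of the in-plane dimension $k$ coming from the two copies of $\sigma_{P_i}$, minus the codimension of the Grassmannian slab against which the translation oscillation cannot gain. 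Once that disintegration estimate is in hand, everything else (Plancherel, the dimension bound, and the $L^2$ upgrade in the second case) is routine.
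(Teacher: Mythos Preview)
This theorem is \emph{not} proved in the present paper: it is quoted as a result of Oberlin \cite{Ob2} and used only for comparison with the author's own bounds. There is therefore no ``paper's own proof'' to compare against.

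That said, your outline is in the spirit of Oberlin's actual argument, which is Fourier-analytic: one pushes a Frostman measure $\mu$ on $E$ forward to a measure $\nu$ on $B$ exactly as you describe and then estimates $|\hat\nu(\xi)|$ via the decay of the Fourier transform of $k$-plane measure. Where your proposal becomes genuinely hand-wavy is the ``Main obstacle'' paragraph. You correctly identify that the work lies in converting the isotropic Frostman bound $\mu(B_\rho(P,r))\le r^s$ into a usable bound on $\mu\{P:|\pi_{V(P)}\xi|\lesssim 1\}$ combined with oscillatory gain in the $a(P)$ variable, but you do not actually carry this out, and the sentence ``a careful bookkeeping \ldots yields a pointwise bound'' is precisely the content of the theorem. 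In particular, the disintegration you allude to is not straightforward: the Frostman measure $\mu$ need not have any product or fibered structure with respect to the splitting $A(n,k)\simeq G(n,k)\times\rr^{n-k}$, so one cannot simply ``absorb'' the Grassmannian codimension and separately extract oscillatory decay in $a$. Oberlin's argument instead bounds $\int|\hat\nu(\xi)|^2|\xi|^{t-n}d\xi$ by a double integral over $E\times E$ of a geometric kernel depending on the relative position of two $k$-planes, and it is the estimation of \emph{that} kernel against the Frostman condition that produces the exponent $2k-k(n-k)+s$. Your proposal has the right ingredients but stops short of the actual estimate.
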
 

\begin{remark}
It is easy to see using Howroyd's theorem (\cite{Ho}) that if Theorem \ref{ober} holds for compact $E \su A(n,k)$, 
then it also holds for Borel (in fact, for analytic) $E$. 
\end{remark}

In \cite{HKM} the authors proved the following. 

\begin{theorem}[H\'era-Keleti-M\'ath\'e, \cite{HKM}] 
\label{hkmthm}
Let $\emptyset \neq E \su A(n, k)$, and put $B=\bigcup_{P \in E} P \su \rr^n$. Then 
$$
\dim B \geq k+ \min\{\dim E,1\}. 
$$
\end{theorem}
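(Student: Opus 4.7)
The trivial lower bound $\dim B \geq k$ follows from $B$ containing any single $k$-plane $P \in E$, so the content of the theorem is the upgrade by $\min\{\dim E, 1\}$. First I would reduce to $\dim E = s \in (0,1]$: using Howroyd's theorem on $(A(n,k), \rho)$ we may assume $E$ is compact, and if $\dim E > 1$ we pass to a compact subfamily of dimension exactly $1$. It then remains to prove $\dim B \geq k + s$.

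The strategy is a potential-theoretic argument. Fix $s' < s$ and let $\mu$ be a Frostman measure on $E$ with $\mu(B_\rho(P, r)) \leq C r^{s'}$ for all $P, r$. For each $P \in E$ let $\sigma_P$ be the restriction of $\mathcal{H}^k$ to $P \cap \overline{B}(0, R)$ for some large fixed $R$, and set $\nu = \int_E \sigma_P \, d\mu(P)$, a finite Borel measure supported on $B$. By the mass-distribution principle it suffices to show that the $(k+s')$-energy $I_{k+s'}(\nu)$ is finite; by Fubini this becomes
\begin{equation*}
I_{k+s'}(\nu) = \iint_{E \times E} J(P_1, P_2) \, d\mu(P_1) \, d\mu(P_2), \quad J(P_1, P_2) = \iint |x - y|^{-(k+s')} \, d\sigma_{P_1}(x) \, d\sigma_{P_2}(y).
\end{equation*}

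The core geometric estimate is $J(P_1, P_2) \leq C\, \rho(P_1, P_2)^{-s'}$, uniformly in the pair (constants depend on $R$ only). I would prove this by parameterizing each $P_i \cap \overline{B}(0, R)$ by a ball in $\R^k$ via an orthonormal frame and performing a change of variables that aligns the two parameterizations. In the model case where $P_1, P_2$ share a $(k-1)$-dimensional subspace and meet at angle $\theta \sim \rho(P_1, P_2)$, the squared distance simplifies to $|w|^2 + z^2$ with $(w, z) \in \R^{k+1}$ and Jacobian $(\sin\theta)^{-1}$; the resulting integral $\int_{\R^{k+1}} |(w, z)|^{-(k+s')} \, d(w,z)$ taken on a bounded region converges near the origin precisely because $s' < 1$, yielding $J \sim (\sin\theta)^{-s'}$. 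The parallel case (same direction, translations at distance $r$) and the general case (arbitrary principal angles, possibly skew) both reduce to the same bound via the equivalence built into the natural metric $\rho$. Combining,
\begin{equation*}
I_{k+s'}(\nu) \lesssim \iint \rho(P_1, P_2)^{-s'} \, d\mu(P_1) \, d\mu(P_2) = I^\rho_{s'}(\mu) < \infty,
\end{equation*}
since $s' < s = \dim E$; hence $\dim B \geq k + s'$ and letting $s' \uparrow s$ completes the proof.

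The technical heart --- and the main obstacle --- is the uniform estimate $J(P_1, P_2) \lesssim \rho(P_1, P_2)^{-s'}$, which must hold across all relative configurations of two $k$-planes in $\R^n$ with dependence only on the scalar $\rho(P_1, P_2)$. The condition $s \leq 1$ enters precisely here: the integrand $|u|^{-(k+s')}$ on $\R^{k+1}$ is locally integrable if and only if $s' < 1$, so the method produces nothing better than $\dim B \geq k + 1$. This matches the $\min\{s,1\}$ ceiling and explains why the theorem cannot be strengthened beyond it by this approach.
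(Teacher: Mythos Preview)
The present paper does not prove this theorem; it is quoted from \cite{HKM} as background, so there is no in-paper argument to compare against. For context, the method in \cite{HKM} is combinatorial: one discretizes, passes to a $\delta$-separated subfamily of $E$, and runs a covering/counting argument to bound Hausdorff content of $B$ from below (the union statement is the $\alpha=k$ specialization of their Furstenberg-type bound). Your potential-theoretic route via the energy $I_{k+s'}(\nu)$ is a genuinely different and classical alternative; it trades the pigeonholing for a single geometric kernel estimate, and it explains the $\min\{s,1\}$ ceiling transparently as an integrability threshold.

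Your sketch is correct in outline but contains a real gap in the ``model case'' computation. After your change of variables the squared distance is indeed $|w|^2+z^2$ with Jacobian $(\sin\theta)^{-1}$, but the new domain of integration is \emph{not} a full bounded ball in $\R^{k+1}$: the $z$-variable is confined to an interval of length $\sim\sin\theta$ (since $z=t'\sin\theta$ with $|t'|\le R$). If you simply bound the domain by a fixed ball and use that $\int_{B}|(w,z)|^{-(k+s')}\,dw\,dz<\infty$ for $s'<1$, you obtain only $J\lesssim(\sin\theta)^{-1}$, which is too weak and would give nothing beyond $\dim B\ge k$ after integrating against $\mu\times\mu$. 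To recover the sharp $J\lesssim(\sin\theta)^{-s'}$ you must exploit the thin-slab structure, e.g.\ by splitting the $w$-integral at $|w|\sim\sin\theta$: on $|w|\lesssim\sin\theta$ the full $(k+1)$-dimensional integrability kicks in, while on $|w|\gtrsim\sin\theta$ the $z$-integral contributes a factor $\sim\sin\theta$ that cancels the Jacobian. Both pieces give $(\sin\theta)^{-s'}$. The general configuration (several nonzero principal angles together with a perpendicular translation) requires an iterated version of this splitting or an induction on the rank of $\pi_{V_1}-\pi_{V_2}$; this is where the honest work lies, and your sentence ``reduce to the same bound via the equivalence built into the natural metric $\rho$'' hides it. Once that estimate is established rigorously, the rest of your plan (Frostman, Fubini, energy of $\mu$ finite for $s'<s$) goes through.
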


Note that in the $k=n-1$ case Theorem \ref{ober} already implies the $\dim B \geq n-1+ \min\{\dim E,1\}$ bound (for analytic $E$). 

\begin{remark}
It is not hard to see (\cite{HKM}) that for any $\emptyset\neq E \su A(n,k)$ we have $\dim \left(\bigcup_{P \in E} P \right) \leq k + \dim E$. 
This implies in particular that for any $\emptyset\neq E \su A(n,k)$ with $\dim E \in [0,1]$, $\dim \left(\bigcup_{P \in E} P \right) = k + \dim E$. 
\end{remark}

In this paper we also obtain new bounds for unions of affine subspaces as a corollary, namely, by applying  
Theorem \ref{thm1} for $B=\bigcup_{P \in E} P \su \rr^n$ and $\al = k$.  

\begin{cor}
\label{cor1}
Let $\emptyset \neq E \su A(n, k)$, and put $B=\bigcup_{P \in E} P \su \rr^n$. Then 
$$
\dim B \geq k+ \frac{\dim E}{k+1}. 
$$
\end{cor}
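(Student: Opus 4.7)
The plan is to apply Theorem \ref{thm1} directly with the choice $\al = k$. This is the choice suggested by the paper itself just before the statement of the corollary, and the rest is a matter of checking that the hypotheses of Theorem \ref{thm1} are met.

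First, I would verify that $B = \bigcup_{P \in E} P$ is an $(\al, k, s)$-Furstenberg set with $\al = k$ and $s = \dim E$. For any $P \in E$, we have $P \su B$ by construction, so $P \cap B = P$, and hence $\dim(P \cap B) = k = \al$. Thus the defining property of an $(\al,k,s)$-Furstenberg set is satisfied (with equality) for the family $E$ itself.

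Next, I would check the range constraints in Theorem \ref{thm1}. The condition $k-1 < \al \leq k$ is met since $\al = k$; the condition $0 \leq s \leq (k+1)(n-k)$ is automatic because $A(n,k)$ has dimension $(k+1)(n-k)$, so any $\emptyset \neq E \su A(n,k)$ satisfies $0 \leq \dim E \leq (k+1)(n-k)$. Then Theorem \ref{thm1} yields
\begin{equation*}
\dim B \;\geq\; \al + \frac{s}{k+1} \;=\; k + \frac{\dim E}{k+1},
\end{equation*}
which is precisely the claimed inequality.

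There is essentially no obstacle to the argument: the corollary follows from Theorem \ref{thm1} in one step once one observes that the union of a family of $k$-planes trivially meets each member of the family in the full $k$-dimensional plane. The only thing worth remarking is that the bound is formally stated for any nonempty $E$ (not just Borel or analytic $E$), which is consistent with the formulation of Theorem \ref{thm1}.
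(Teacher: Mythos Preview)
Your proof is correct and follows exactly the approach indicated in the paper: apply Theorem~\ref{thm1} with $\al = k$ to $B = \bigcup_{P\in E} P$, noting that $P \cap B = P$ has dimension $k$ for every $P \in E$. The paper does not spell out any further details beyond this one-line observation, so your write-up is in fact more explicit than the original.
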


Easy computation gives that our estimate in Corollary \ref{cor1} is better than the bounds obtained in Theorem \ref{ober} and Theorem \ref{hkmthm} if 
$$k+1 < \dim E < (k+1)(n-k)-k-1.$$
The following easy construction will show that the bound in Theorem \ref{hkmthm} is sharp if $0 \leq \dim E \leq k+1$, and 
the bound in Theorem \ref{ober} is sharp if $(k+1)(n-k)-k-1 \leq \dim E \leq (k+1)(n-k)-k$, 
so in this paper we improve the previously known bounds in the whole region where it is possible. 
The next proposition will also show that our estimate in Corollary \ref{cor1} is not far from being sharp in general, and that it is sharp for some parameters.
Figure \ref{graph} below illustrates this situation. 

\begin{proposition}
\label{propalm}
For any $0 \leq s \leq (k+1)(n-k)$ there exists $B=\bigcup_{P \in E} P \su \rr^n$ with $\emptyset \neq E \su A(n,k)$ Borel and $\dim E = s$ such that
$$\dim B = \left.
  \begin{cases}
    s-k \lceil \frac{s}{k+1} \rceil + 2k & \text{if }   \ \lceil \frac{s}{k+1} \rceil \geq \frac{k+s}{k+1}, \\
    k+\lceil \frac{s}{k+1} \rceil & \text{if } \ \lceil \frac{s}{k+1} \rceil \leq \frac{k+s}{k+1}.
  \end{cases}
\right.$$ 
\end{proposition}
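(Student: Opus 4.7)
My plan is to construct, for each $s\in[0,(k+1)(n-k)]$, a Borel family $E\subset A(n,k)$ with $\dim E=s$ realising the asserted value of $\dim B=\dim\bigcup_{P\in E}P$. Write $\ell:=\lceil s/(k+1)\rceil$ (so $\ell\le n-k$) and $s_0:=\ell(k+1)-k$, the common value at which the two formulas of the proposition coincide. I will treat the two subranges $s\in[s_0,\ell(k+1)]$ and $s\in((\ell-1)(k+1),s_0]$ separately. The underlying intuition is that in one range $B$ is trapped inside a low-dimensional flat, while in the other an extra Cantor direction is added transversally.

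\emph{Upper subrange $s_0\le s\le\ell(k+1)$, target value $k+\ell$.} Fix a $(k+\ell)$-dimensional affine subspace $H\subset\R^n$ and choose a Borel set $E\subset A(H,k)\subset A(n,k)$ of Hausdorff dimension exactly $s$; this is possible since $A(H,k)$ is a smooth manifold of dimension $\ell(k+1)\ge s$, and any prescribed Hausdorff dimension is realised by a self-similar Cantor construction in local coordinates. Setting $B=\bigcup_{P\in E}P\subset H$, the upper bound $\dim B\le k+\ell$ is obvious. For the matching lower bound, apply Theorem~\ref{ober} inside $H\cong\R^{k+\ell}$: for $s>s_0$ the Lebesgue-measure alternative forces $\dim B=k+\ell$, while at $s=s_0$ the Hausdorff alternative gives $\dim B\ge 2k-k\ell+s_0=k+\ell$.

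\emph{Lower subrange $(\ell-1)(k+1)<s\le s_0$, target value $2k+s-k\ell$.} Use a product construction: fix a $(k+\ell-1)$-dimensional affine subspace $H_1\subset\R^n$, a unit vector $e$ orthogonal to the direction space of $H_1$, and a compact self-similar set $C\subset\R$ with $\dim C=t:=s-(\ell-1)(k+1)\in(0,1]$ and $\hau^t(C)>0$. Define
$$B:=H_1+Ce,\qquad E:=\{P+ce:P\in A(H_1,k),\ c\in C\}\subset A(n,k),$$
so that $B=\bigcup_{P\in E}P$. Because $P+ce$ has the same direction space as $P$ and the orthogonal translation splits additively, the map $(P,c)\mapsto P+ce$ is a bi-Lipschitz injection from $A(H_1,k)\times C$ (with the natural product metric) into $(A(n,k),\rho)$. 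Hence $\dim E=\dim A(H_1,k)+\dim C=(\ell-1)(k+1)+t=s$, and the Hausdorff product identity (applicable since $H_1$ is a flat) gives $\dim B=(k+\ell-1)+t=2k+s-k\ell$.

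I expect the only technical subtleties to be verifying (a) the bi-Lipschitz estimate for $(P,c)\mapsto P+ce$, which follows directly from the explicit formula $m(P_1,P_2)=\|\pi_{V_1}-\pi_{V_2}\|+|a_1-a_2|$ since the direction spaces agree and the $a_i$-coordinates decompose cleanly, and (b) the lower bound $\dim(H_1+Ce)\ge(k+\ell-1)+t$, which follows from Marstrand's product theorem applied to the regular flat $H_1$ and $C$. Continuity across the shared boundaries is automatic: at $s=s_0$ both formulas yield $k+\ell$ and either construction works, and at $s=(\ell-1)(k+1)$ the ceiling drops so the upper-subrange construction with $\ell$ replaced by $\ell-1$ covers that value.
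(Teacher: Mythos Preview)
Your proof is correct, and in the lower subrange it is essentially identical to the paper's: both take $B=H_1+Ce$ with $H_1$ a $(k+\ell-1)$-flat and $C\subset\R$ of dimension $s-(\ell-1)(k+1)$, and read off $\dim E$ and $\dim B$ from the product structure.

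The only genuine difference is in the upper subrange $s_0\le s\le\ell(k+1)$. You choose an arbitrary compact $E\subset A(H,k)$ with $\dim E=s$ and then invoke Oberlin's theorem inside $H\cong\R^{k+\ell}$ to force $\dim\bigl(\bigcup_{P\in E}P\bigr)=k+\ell$. The paper instead takes the \emph{fixed} set $B=\R^{k+\ell}\times\{0\}$ from the outset, observes that $B=\bigcup_{P\in E_0}P$ for the explicit family $E_0=\{P+ce:P\in A(\R^{k+\ell-1},k),\ c\in\R\}$ of dimension $(\ell-1)(k+1)+1=s_0$, and then simply picks any Borel $E$ with $E_0\subset E\subset A(k+\ell,k)$ and $\dim E=s$; since $E_0\subset E$ already forces $\bigcup_{P\in E}P=B$, the value $\dim B=k+\ell$ is immediate. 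So the paper's argument is completely elementary in this range, while yours imports a nontrivial external theorem to do the same job. Your route has the minor advantage that the $E$ can be chosen freely (not sandwiched between $E_0$ and $A(k+\ell,k)$), but at the cost of relying on Oberlin's result where no such machinery is needed.
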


\newsavebox{\gs}
\savebox{\gs}{$h(s)=
  \left\{ \begin{array}{cc}
    s-2 \lceil \frac{s}{3} \rceil + 4 & \text{if }   \ \lceil \frac{s}{3} \rceil \geq \frac{2+s}{3}, \\
    2+\lceil \frac{s}{3} \rceil & \text{if } \ \lceil \frac{s}{3} \rceil \leq \frac{2+s}{3}
  \end{array} \right.$}

\begin{figure}
\begin{center}
\includegraphics[width=0.9\linewidth]{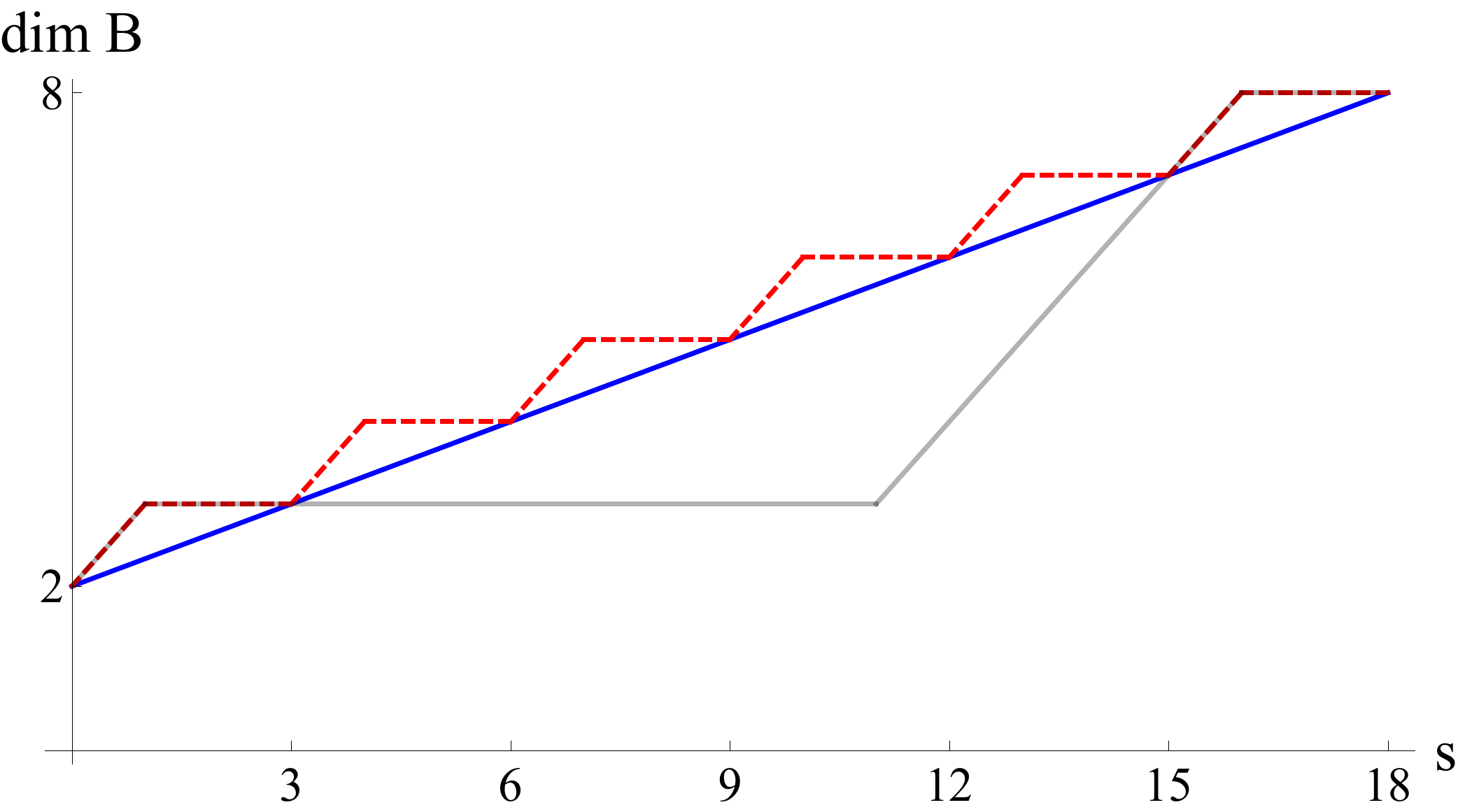}
\caption{The figure illustrates the graphs of three functions: 
the dimension bound obtained from the combination of Theorem \ref{ober} and  
Theorem \ref{hkmthm} ($f(s)=\max\{\min\{s-8,8\},2+\min\{s,1\}\}$, gray), 
the dimension bound obtained in Corollary \ref{cor1} ($g(s)=2+\frac{s}{3}$, blue), 
and the dimension of the construction in Proposition \ref{propalm}  (\usebox{\gs}, red dashed)
in the $k=2, n=8$ case.   
The regions where the red and gray graphs coincide show the intervals in which the bounds obtained in Theorem \ref{hkmthm} and Theorem \ref{ober} are sharp,
and the intersection points of the blue and red graphs indicate the values for which Corollary \ref{cor1} is sharp.}

\label{graph}
\end{center}
\end{figure}

\begin{proof}

Let $0 \leq s \leq (k+1)(n-k)$, and put $m=\lceil \frac{s}{k+1} \rceil \leq n-k$. 
First assume that $m \geq \frac{k+s}{k+1}$, and put $B=\rr^{m+k-1} \times A \times \{0 \}$, where $A \su \rr$ Borel with $\dim A = s-(k+1)(m-1)$. 
Then clearly, $B=\bigcup_{P \in E} P \su \rr^n$, where $E \su A(n,k)$ consists of all $k$-planes of $\rr^{m+k-1}$ translated by the elements of $A$. 
Clearly, $E$ is Borel and $\dim E = (k+1)(m-1)+ \dim A=s$. Moreover, $\dim B =m+k-1 + \dim A=s-km+2k$, so we are done. 

If $m \leq \frac{k+s}{k+1}$, put $B=\rr^{m+k-1} \times \rr \times \{0 \}$. Then it is easy to see that $B = \bigcup_{P \in E_0} P$, where $E_0 \su A(n,k)$ consists of all $k$-planes of $\rr^{m+k-1}$ translated by the elements of $\rr$, and clearly, $B = \bigcup_{P \in E} P$ for any $E_0 \su E \su A(m+k,k)$. 
Then $\dim B= m+k$, and for any $(k+1)m-k\leq s \leq (k+1)m$, there exists  $E \su A(n,k)$ Borel with with $\dim E = s$ such that $B=\bigcup_{P \in E} P,$ so we are done. 

\begin{remark}
With an easy modification of the proof, one can also construct a suitable $B=\bigcup_{P \in E} P \su \rr^n$ with $E \su A(n,k)$ compact. 
\end{remark}

\end{proof}

\begin{remark}
Clearly, the example in Proposition \ref{propalm} shows that for any $s \in [0,(k+1)(n-k)]$, there exists 
$B=\bigcup_{P \in E} P \su \rr^n$ with $E \su A(n,k)$ Borel and $\dim E = s$ such that
$$\dim B -\left(k+\frac{s}{k+1}\right) \leq  \frac{k}{k+1} < 1,$$
so our estimate in Corollary \ref{cor1} is not far from being sharp in general, the gap is less than $1$. 
Also, note that if $s=m(k+1)$ for some integer $m \in [0,n-k]$, then $\dim B = k + m = k + \frac{s}{k+1}$ in Proposition \ref{propalm}, 
which is exactly the bound obtained from Corollary \ref{cor1} for $B=\bigcup_{P \in E} P$ with $\dim E =s$, so Corollary \ref{cor1} is sharp for these parameters. 
\end{remark}

We also formulate the following conjecture. 

\begin{conjecture}
\label{conj}
Let $0 \leq s \leq (k+1)(n-k)$, $\emptyset \neq E \su A(n, k)$ with $\dim E = s$, and put $B=\bigcup_{P \in E} P \su \rr^n$. Then 
\begin{equation}
\dim B \geq \left.
  \begin{cases}
    s-k \lceil \frac{s}{k+1} \rceil + 2k & \text{if }   \ \lceil \frac{s}{k+1} \rceil \geq \frac{k+s}{k+1}, \\
    k+\lceil \frac{s}{k+1} \rceil & \text{if } \ \lceil \frac{s}{k+1} \rceil \leq \frac{k+s}{k+1}.
  \end{cases}
\right.
\label{eqconj}
\end{equation}
\end{conjecture}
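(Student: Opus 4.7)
The conjectured bound is piecewise linear in $s$: on each short interval $((m-1)(k+1),\,m(k+1)-k]$ it has slope $1$ and rises from $k+m-1$ to $k+m$, and on the subsequent flat interval $[m(k+1)-k,\,m(k+1)]$ of length $k$ it remains at the integer value $k+m$, where $m=\lceil s/(k+1)\rceil$. This is matched exactly by the constructions in Proposition~\ref{propalm}, so the conjecture is essentially asserting that a near-extremal family $E$ must concentrate inside a single $(k+m)$-dimensional affine subspace, with the $k$ ``free'' directions of $E$ over each flat interval accounting for how $k$-planes can be parametrized inside such a slice. My plan is to prove the conjecture by a double induction on $n-k$ and on $m$, using Corollary~\ref{cor1} as a continuous backbone and upgrading it to the sharper integer estimates through slicing and projection.

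For the base case $m=1$ the conjecture coincides with Theorem~\ref{hkmthm}. For the inductive step, fix $s\in((m-1)(k+1),\,m(k+1)]$ with $m\ge 2$, suppose $\dim B$ is strictly below the target value, and take a generic linear projection $\pi\co\rr^n\to\rr^{n-1}$. When $k<n-1$ each $P\in E$ maps to a $k$-plane $\pi(P)\in A(n-1,k)$, and a Marstrand-type estimate on the Grassmannian component of $A(n,k)$ should give $\dim\pi_*E\ge\min\{\dim E,(k+1)(n-1-k)\}$. Then $\pi(B)\sp\bigcup_{P'\in\pi_*E}P'$ satisfies $\dim\pi(B)\le\dim B$, still below the target in $\rr^{n-1}$, so the inductive hypothesis yields a contradiction as long as the projected parameters remain in the same regime. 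When the projected family saturates (the Oberlin regime near the right endpoint) Theorem~\ref{ober} takes over directly, and the degenerate case $k=n-1$ is already covered by Theorem~\ref{ober}.

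The main obstruction is the integer jump. Corollary~\ref{cor1} supplies only the average slope $1/(k+1)$, whereas the conjecture demands that once $s$ crosses $(m-1)(k+1)$ the dimension of $B$ rises by the full integer step $1$ over one unit of $s$ and then stays flat for the next $k$ units. Proving this crisp ``step function'' estimate seems to require either a substantially sharpened Furstenberg-type inequality with an extra gain whenever $\dim B$ falls short of the next integer, or a structural/rigidity theorem classifying near-extremizers and forcing them to lie inside a $(k+m)$-plane. Neither is directly available from the $\de$-discretization and combinatorial counting used to establish Theorem~\ref{thm2}, and I expect this saturation step -- converting a continuous slope bound into a piecewise constant one -- to be the genuinely hard part of the conjecture.
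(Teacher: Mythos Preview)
The statement you are addressing is labeled a \emph{Conjecture} in the paper, and the paper does not supply a proof; it only remarks that the bound \eqref{eqconj} is known at the two ends of the range of $s$ (by Theorem~\ref{hkmthm} for $s\in[0,k+1]$ and by Theorem~\ref{ober} for $s\in[(k+1)(n-k)-k-1,(k+1)(n-k)]$), and that Proposition~\ref{propalm} shows the bound would be sharp. There is therefore no ``paper's own proof'' to compare against: the conjecture is open.

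Your write-up is best read as a heuristic outline rather than a proof, and you yourself flag this. Two concrete gaps stand out. First, the projection step is not justified: there is no Marstrand-type theorem for the parameter space $A(n,k)$ guaranteeing $\dim \pi_* E \ge \min\{\dim E,\,(k+1)(n-1-k)\}$ for a generic linear $\pi\colon \rr^n\to\rr^{n-1}$. The map $P\mapsto \pi(P)$ collapses an $(n-k)$-dimensional fiber in $A(n,k)$ (all $k$-planes with the same image), and controlling how $E$ meets these fibers is exactly the kind of structural information one lacks. Second, and more seriously, you correctly isolate the ``integer jump'' as the heart of the matter: the conjecture demands that $\dim B$ increase with slope $1$ on each interval $((m-1)(k+1),\,m(k+1)-k]$ and then stay flat, whereas the only general tool available (Corollary~\ref{cor1}) gives slope $1/(k+1)$ throughout. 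Your inductive scheme does not manufacture this extra gain; it merely pushes the same deficit down to lower $n$. In short, the proposal accurately diagnoses why the conjecture is hard but does not supply the missing ingredient, which is consistent with the paper's own stance that the problem is open.
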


Note that the dimension bound in \eqref{eqconj} equals the dimension of the construction in Proposition \ref{propalm}, so if Conjecture \ref{conj} is true, then it is sharp. The conjecture is motivated by the fact that Theorem \ref{hkmthm} and Theorem \ref{ober} imply that \eqref{eqconj} holds if $s \in [0,k+1] \cup [(k+1)(n-k)-k-1, (k+1)(n-k)]$. 

\section{The proof of Theorem \ref{thm2}, preparatory tools}
\label{bas}

Note that the statement of Theorem \ref{thm2} is trivially true if $s=0$. Let $s>0$. 

We introduce the following notations. Let $e_0=(0,\dots,0)$; let $e_1, \dots, e_n$ be the standard basis vectors of $\rr^n$, and 
let $V_0$ be the $k$-dimensional linear space spanned by $e_1, \dots, e_k$. Put $H_0=V_0^{\bot}$, and  
$H_i=e_i + H_0$. Then $H_i$ is an $n-k$-dimensional affine subspace for all $i=1,\dots,k$. 
We will use the following notation for the collection of $k$-planes which are positioned close to the horizontal subspace $V_0$: 
\begin{equation}
\calh=\{P \in A(n,k): P \cap H_i=\{x_i\} \ \text{for some} \ x_i \in [0,1]^n \ \forall \ i=0,1,\dots,k\}.
\label{hor}
\end{equation}

First we make several assumptions which do not restrict generality. The exact same arguments as the ones used in 
\cite[Lemma 3.3 and 3.4]{HKM} imply the following. 

\begin{lemma}
\label{assump}
\label{ass}
We can make the following assumptions in the proof of Theorem \ref{thm2}:
\begin{enumerate}[a)]
\item \label{bounded}   
$B  \su [0,1]^n$, $\hau^{\al}_{\infty} (B) \leq 1$;

\item\label{gdelta}
$B$ is a $G_\delta$ set, that is, a countable intersection of open sets;

\item
\label{comp}
\label{p0}
$E \su A(n,k)$ is compact, and $\hau^s(E)>0$;

\item
\label{a2}
\label{p1} $E \su \calh$;

\item
\label{a1}
there is $0 < \ep \leq 1$ such that for every $P\in E$,
$\hau^{\al}_{\infty}(P \cap B) \geq \ep.$

\end{enumerate}
\end{lemma}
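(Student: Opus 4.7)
The plan is to make the five reductions (a)--(e) in turn, absorbing any infinitesimal losses in the parameters $\al$ and $s$ into a limiting argument at the end: since the right-hand side of \eqref{smmm} depends continuously on $(\al, s)$, it suffices to prove Theorem \ref{thm2} under the stronger assumptions with $(\al, s)$ replaced by $(\al', s')$ for arbitrary $\al' < \al$ and $s' < s$, and then let $(\al', s') \to (\al, s)$. Each individual reduction is a standard pigeonhole/covering argument, following the pattern of \cite[Lemmas 3.3 and 3.4]{HKM}.

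For part (a), I would cover $\rr^n$ by countably many unit cubes $\{Q_j\}$. For each $P \in E$, countable stability of Hausdorff dimension applied to $P \cap B = \bigcup_j (P \cap B \cap Q_j)$ yields some $j$ with $\dim(P \cap B \cap Q_j) \geq \al$, so $E = \bigcup_j E_j$ with $E_j = \{P \in E : \dim(P \cap B \cap Q_j) \geq \al\}$; countable stability applied to $E$ then gives some $j$ with $\dim E_j = s$. Replacing $(B, E)$ by $(B \cap Q_j, E_j)$, then translating and rescaling by a sufficiently small $\lambda > 0$ (which induces a bi-Lipschitz change of the natural metric on $A(n,k)$, hence preserves dimensions) places $B$ in $[0,1]^n$ and forces $\hau^\al_\infty(B) \leq 1$. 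For part (b), I would fix open covers $\{U_m^{(i)}\}_i$ of $B$ with $\sum_i (\diam U_m^{(i)})^\al \leq \hau^\al_\infty(B) + 1/m$, and set $B' = \bigcap_m \bigcup_i U_m^{(i)}$; then $B'$ is $G_\delta$, contains $B$ (so $\dim(P \cap B') \geq \al$ for every $P \in E$), and has the same content bound.

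Part (c) is the step I expect to be the main obstacle. Since $B$ is now Borel, a standard descriptive-set-theoretic argument shows that for every $\al' < \al$ the set $F_{\al'} = \{P \in A(n,k) : \hau^{\al'}(P \cap B) > 0\}$ is analytic in $A(n,k)$. The definition of Hausdorff dimension forces $\hau^{\al'}(P \cap B) = \infty$ for each $P \in E$, so $E \subset F_{\al'}$ and $\dim F_{\al'} \geq s$. Howroyd's theorem then furnishes, for any $s' < s$, a compact $E_0 \subset F_{\al'}$ with $0 < \hau^{s'}(E_0) < \infty$; I replace $(E, \al, s)$ by $(E_0, \al', s')$ and absorb the loss in the limiting argument described above. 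Verifying analyticity of $F_{\al'}$ and invoking a Howroyd-type inner regularity theorem in the Polish space $A(n,k)$ is where the most care is required.

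For part (d), the compactness of $G(n,k)$ lets me choose finitely many orthogonal transformations $R_1, \dots, R_N$ of $\rr^n$ such that $A(n,k) = \bigcup_j R_j^{-1}(\calh)$. Finite stability of dimension gives some $j$ with $\dim(E \cap R_j^{-1}(\calh)) = s$; applying $R_j$ (an isometry of $\rr^n$ acting as an isometry on $A(n,k)$ in a natural metric, up to bi-Lipschitz equivalence) to both $B$ and $E$ puts $E$ inside $\calh$ while preserving all relevant dimensions and intersection properties. For part (e), the premeasures $\hau^\al_\infty(P \cap B)$ are uniformly bounded above by $\hau^\al_\infty(B) \leq 1$, and, after the lowering of $\al$ effected in (c), are strictly positive for each $P \in E$. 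A dyadic pigeonhole on the values $\hau^\al_\infty(P \cap B) \in (2^{-j-1}, 2^{-j}]$ together with countable stability extracts a full-dimensional subfamily on which the uniform lower bound $\ep = 2^{-j-1}$ holds, completing the reduction.
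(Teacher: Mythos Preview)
Your strategy is correct and is precisely what the paper invokes from \cite[Lemmas 3.3 and 3.4]{HKM}. Two points of bookkeeping need repair, however.

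First, performing (e) \emph{after} (c) destroys compactness: the superlevel sets $\{P:\hau^{\al'}_\infty(P\cap B)>c\}$ are merely analytic (this is exactly the Elekes--Vidny\'anszky lemma quoted later in the paper, and it needs $B$ to be $G_\delta$, which is why (b) precedes everything), not closed, and the half-open dyadic shells $(2^{-j-1},2^{-j}]$ you pigeonhole on are differences of analytic sets and need not be analytic at all. ``Countable stability'' then only returns a subfamily of dimension $s'$, not one that is compact with positive $\hau^{s'}$-measure, so condition (c) has been lost. The clean fix is to swap the order: pigeonhole first on the analytic superlevel sets of the original $E$ to extract a full-dimensional analytic subfamily with the uniform content bound $\ep$, and only then apply Howroyd once to obtain a compact subset of positive $\hau^{s'}$-measure. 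Compactness then survives step (d), provided you use finite subadditivity of $\hau^{s'}$ there (not just stability of dimension) and note that $R_j^{-1}(\calh)$ is closed.

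Second, your claim in (d) that finitely many rotates of $\calh$ cover $A(n,k)$ is false: $\calh$ is a bounded set of $k$-planes, and rotations do not move planes closer to the origin. Compactness of $G(n,k)$ handles only the \emph{directional} part. You must interleave (d) with (a): use finitely many rotations to make the directions of all $P\in E$ nearly horizontal, then redo the translation/rescaling of (a), shrinking $B$ into a slightly smaller subcube so that for each nearly-horizontal $P$ meeting $B$ the intersection points $P\cap H_i$ actually land in $[0,1]^n$.
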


Let us now fix $B$, $E$, $\ep$ with properties given by Lemma~\ref{ass}. 

We apply Frostman's lemma (see e.g. \cite{Ma95}) 
to obtain a probability measure $\mu$ on $A(n,k)$ (for which Borel and analytic sets are measurable) supported on $E$
for which
\begin{equation}
\label{Frost}
\mu (B(P,r)) \leq r^s 
\end{equation}
for all $r>0$ and all $P \in E$. 

\def\eps{\ep}

Now we turn to estimating the dimension of the set $B$. Our aim is to show that
$\hau^{t-\ga}(B)>0$ for any $\ga >0$, where $t=\al+\frac{s-(k-\lceil \al \rceil)(n-k)}{\lceil \al \rceil+1}$.

For our convenience, we will use net measures instead of Hausdorff measures. 
Let $\mathcal{D}$ denote the family of closed dyadic cubes in $\rr^n$, that is, cubes of the form
$$
\{x \in \rr^n: m_j 2^{-l} \leq x_j \leq (m_j + 1 ) 2^{-l}, m_j \in \mathbb{Z} \ (j=1,\dots,n), l \in \mathbb{Z} \}.
$$ 
Then for a set $X \su \rr^n$, $s \geq 0$ and $\de>0$, 
$$\net^s_{\de}(X)=\inf \{ \sum_{i=1}^{\infty} (\diam(D_i))^s : X \su \bigcup_{i=1}^{\infty} D_i, D_i \in \mathcal{D}, \diam(D_i) \leq \de \}$$ and 
$\net^s(X)=\lim_{\de \to 0} \net^s_{\de}(X)$. 
It is well known (see e.g. \cite{Fa}) that there exists a constant $b(n,s) \neq 0$ depending only on  $n$ and $s$ such that for every $X \su \rr^n$, 
$\hau^s(X) \leq \net^s(X) \leq b(n,s) \hau^s(X)$. 

We will show that  
\begin{equation}
\net^{t-\ga}(B)>0
\label{eqnet}
\end{equation}

for any $\ga >0$, where $t=\al+\frac{s-(k-\lceil \al \rceil)(n-k)}{\lceil \al \rceil+1}$, and this will imply  
$$\dim B\geq \al+\frac{s-(k-\lceil \al \rceil)(n-k)}{\lceil \al \rceil+1}.$$
Let $K$ be a positive integer such that
\begin{equation}
\sum_{l=K}^\infty 1/l^2 < \ep, 
\label{epsilon}
\end{equation} 
where $\ep$ is from \eqref{a1} of Lemma \ref{ass}, and such that 
\begin{equation}
k \cdot 2^{k+1} \cdot l^{2\phi} \leq 2^{l} \ \text{for all} \ l \geq K, 
\label{power}
\end{equation} 
where $\phi=\frac{\lceil \al \rceil}{\al-(\lceil \al \rceil-1)}$. 
For a dyadic cube $D_i \in \mathcal{D}$, let $r_i$ denote the side length of $D_i$. 

Let $B \su \bigcup_{i=1}^{\infty} D_i$ be any countable cover consisting of dyadic cubes 
such that $r_i \leq 2^{-K}$ for all $i$.
For any $l \geq K$, let 
$$J_l=\{ i : r_i =2^{-l} \}.$$ 
Let $R_l=\cup_{i \in J_l} D_i$, and $B_l=R_l \cap B$. Then $B=\cup_{l=K}^{\infty} B_l$. 

Our aim is to find a big enough subset of $B$ which is covered by cubes of the same side length and such that many of the affine subspaces of $E$ have big intersection with it. 
\def\rc{l}

\begin{lemma}
\label{prob2}
There exists an integer $\rc \geq K$ such that 
\begin{equation}
\label{B_l}
\mu\left(P \in E \colon \hau^{\al}_{\infty}(P \cap B_{\rc}) \geq 
\frac{1}{\rc^2 }\right) \geq \frac{1}{\rc^2 }.
\end{equation}
\end{lemma}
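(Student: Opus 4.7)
The plan is a two-step pigeonhole argument: first to produce, for each individual $P \in E$, a scale $l(P)$ at which $P \cap B_{l(P)}$ is large in $\al$-content; then to pigeonhole these scales on the measure side.

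First I would exploit countable subadditivity of the Hausdorff content. Since $B = \bigcup_{l \ge K} B_l$ and $\hau^{\al}_{\infty}$ is an outer measure,
\begin{equation*}
\sum_{l=K}^{\infty} \hau^{\al}_{\infty}(P \cap B_l) \;\ge\; \hau^{\al}_{\infty}(P \cap B) \;\ge\; \eps,
\end{equation*}
using assumption (\ref{a1}) of Lemma~\ref{ass}. Setting
\begin{equation*}
A_l := \Bigl\{ P \in E \colon \hau^{\al}_{\infty}(P \cap B_l) \ge \tfrac{1}{l^2} \Bigr\},
\end{equation*}
I claim $E = \bigcup_{l \ge K} A_l$: if some $P \in E$ were in none of the $A_l$, then the previous display would give $\eps \le \sum_{l \ge K} 1/l^2 < \eps$ by the choice of $K$ in \eqref{epsilon}, a contradiction.

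Next I apply the same pigeonhole on the measure side. Since $\mu$ is a probability measure supported on $E$, countable subadditivity gives $\sum_{l \ge K} \mu(A_l) \ge \mu(E) = 1$. If every $A_l$ had $\mu(A_l) < 1/l^2$, then
\begin{equation*}
1 \;\le\; \sum_{l \ge K} \mu(A_l) \;<\; \sum_{l \ge K} \tfrac{1}{l^2} \;<\; \eps \;\le\; 1,
\end{equation*}
again contradicting \eqref{epsilon} together with $\eps \le 1$ from Lemma~\ref{ass}. Hence some $l \ge K$ satisfies $\mu(A_l) \ge 1/l^2$, which is the desired conclusion.

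The only thing to check carefully is measurability: for the second pigeonhole step to make sense I need $A_l$ to be in the $\mu$-measurable $\sigma$-algebra. Since $B$ is a $G_\delta$ set by (\ref{gdelta}) of Lemma~\ref{ass} and $R_l$ is closed, each $B_l = R_l \cap B$ is $G_\delta$; one should then verify that $P \mapsto \hau^{\al}_{\infty}(P \cap B_l)$ is Borel (or at least analytically measurable), which is enough because $\mu$ from Frostman's lemma is defined on Borel and analytic sets. This is the only routine but non-trivial point; the pigeonholing itself is immediate once the subadditivity estimate is in hand, and no geometry of affine subspaces is used at this stage.
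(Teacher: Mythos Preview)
Your argument is correct and essentially identical to the paper's proof: both combine countable subadditivity of $\hau^{\al}_{\infty}$ with that of $\mu$ and the choice $\sum_{l\ge K}1/l^2<\eps\le 1$, differing only in that you split the reasoning into two pigeonhole steps while the paper runs a single contrapositive. The measurability issue you flag is exactly what the paper addresses, by invoking a lemma of Elekes--Vidny\'anszky stating that $\{P\in A(n,k):\hau^{\al}_{\infty}(P\cap X)>c\}$ is analytic whenever $X\subset\rr^n$ is bounded $G_\delta$.
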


\begin{proof}
For $l \geq K$, let $$A_l=\left\{P \in E \colon \hau^{\al}_{\infty}(P \cap B_{l}) \geq \frac{1}{\rc^2}\right\}.$$ 
We need that the sets $A_l$ are $\mu$-measurable. 
It is easy to see that if $B\subset \R^n$ is compact, then the sets $A_l$ are closed. 
Therefore, if we had the extra assumption that $B \su \rr^n$ is compact, there would be no problem with the measurability.  

In general, the measurability of the above sets follows from the following lemma of M. Elekes and Z. Vidny\'anszky, which they 
have not published yet, a detailed proof can be found in \cite{HKM}. 
For the definition of analytic sets, see e.g. \cite{Fr}. 

\begin{lemma*}[Elekes-Vidny\'anszky]
\label{use}
If $X\subset \R^n$ is bounded $G_\delta$, then 
$$\{P \in A(n,k) \colon \hau^\alpha_{\infty}(P\cap X) > c \}$$ is analytic for all $\al>0$ and $c \geq 0$. 
\end{lemma*}

This lemma easily implies that $A_l$ is analytic, thus it is also $\mu$-measurable for each $l$. Assume that $\mu(A_{\rc}) < 1/{\rc}^2$ for all ${\rc} \geq K$.  
Since $\sum_{l=K}^\infty 1/l^2 < 1$, these sets $A_{\rc}$ cannot cover $E$. Therefore, there exists $P\in E$
such that $\hau^{\al}_{\infty}(P \cap B_{\rc})< 1/l^2$ for every $l \geq K$, and thus
$\hau^{\al}_{\infty}(P \cap B)< \sum_{l=K}^\infty 1/l^2<\eps$ by \eqref{epsilon}, which contradicts \eqref{a1} of Lemma \ref{ass}. 
\end{proof}

Fix the integer $\rc$ obtained from Lemma \ref{prob2} and put 

\begin{equation}
\label{tie}
\ti{E}=A_{\rc}=\left\{P \in E \colon \hau^{\al}_{\infty}(P \cap B_{\rc}) \geq \frac{1}{\rc^2} \right\}, 
\end{equation}

\begin{equation}
\label{eqj}
J=J_l.
\end{equation}

We have 
\begin{equation}
\label{est}
\mu( \ti{E}) \geq \frac{1}{\rc^2} \ \textrm{and} \ \hau^{\al}_{\infty}(P \cap B_{\rc})\geq \frac{1}{\rc^2}
\end{equation}
for every $P \in \ti{E}$ by Lemma \ref{prob2}. 

\section{The proof of Theorem \ref{thm2}, main argument}
\label{count}

The main idea of the proof of Theorem \ref{thm2} is to discretize the problem in an appropriate way, and to count the cardinality of a suitably defined finite set in two different ways. 
This idea was originally used in \cite{Wo99} to prove the lower estimate $\al + \frac{1}{2}$ for the Hausdorff dimension of classical  $\al$-Furstenberg-sets in the plane. 
We needed to generalize the idea in \cite{Wo99} to fit into our context. See also \cite{MR}.

Recall that 
\begin{equation}
\ti{E}=\left\{P \in E \colon \hau^{\al}_{\infty}(P \cap B_{\rc}) \geq \frac{1}{\rc^2}\right\}, 
\mu( \ti{E}) \geq \frac{1}{\rc^2}
\label{eqremind}
\end{equation} 
 by \eqref{tie} and \eqref{est}. 

We introduce the following notations. Put 
\begin{equation}
\de=2^{-l}, \la=\frac{1}{l^2}.
\label{eqdela}
\end{equation}
 
Note that using \eqref{power}, $\la < 1$, and $\phi =\frac{\lceil \al \rceil}{\al-(\lceil \al \rceil-1)}\geq 1$, we have 
\begin{equation}
\de \leq \frac{1}{2^{k+1}k} \cdot \la^{\phi} \leq \la.
\label{eqscales}
\end{equation}
 We will work with these two scales $\de$ and $\la$, 
and their relation \eqref{eqscales} will be important in the proofs. 

First we choose a maximal $\de$-separated set of affine subspaces from $\ti{E}$. 
This means, let 
$P_1,\dots, P_M \in \ti{E}$ with $\rho(P_i,P_j) \geq \de$ for every $i \neq j$, 
where $\rho$ indicates the given metric on $A(n,k)$, and such that $M$ is maximal. 
Then by the maximality of $M$, we have 
$$ \ti{E} \su \bigcup_{i=1}^{M} B_{\rho}(P_i,\de) \su A(n,k).$$ 
Thus by \eqref{eqdela}, \eqref{est} and \eqref{Frost}, 
$$ \la= \frac{1}{\rc^2} \leq \mu( \ti{E}) \leq \sum_1^M \mu( B_{\rho} (P_i, \de)) \leq M \de^s.$$

This implies
\begin{equation}
\label{direct}
M \geq \la\de^{-s}.
\end{equation}
Fix such a maximal $\de$-separated set 
\begin{equation}
E'=\{P_1,\dots, P_M \}.
\label{deltasep}
\end{equation}

Put 
\begin{equation}
m=\lceil \al \rceil \leq k, \ \ep = \al-(m-1) >0. 
\label{eqmep}
\end{equation}
We will use the following geometric lemma, which is a major element of the proof. 
The orthogonal projection onto the horizontal $k$-plane $V_0$ will be denoted by $\proj_0$. 

\begin{lemma}
\label{pconst}
For each $P \in \ti{E}$, there exist compact sets $T_0(P),\dots,T_m(P) \su [0,1]^n \cap P$ with the following properties: 
\begin{enumerate}[(1)]
	\item
	\label{w1}
	For any $z_0 \in \proj_0(T_0(P)),\dots,z_m \in \proj_0(T_m(P))$, $\hau^m(\De(z_0,\dots,z_m)) \gkb_{n,k,\al} \la^{\phi},$ 

	\item 
	\label{w2}
	$\hau^{\al}_{\infty}(T_i(P) \cap B_l) \gkb_{n,k,\al}  \la^{\psi}$ for all $i=0,\dots, m$, 
\end{enumerate}
where $\phi= \frac{m}{\ep}$, $\psi=1+\frac{mk}{\ep},$ and $\ep$ is from \eqref{eqmep}. 

\end{lemma}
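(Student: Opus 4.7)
The plan is to work inside $P$, identified with $V_0 \cong \R^k$ via $\proj_0|_P$, which (since $P \in \calh$) is a bi-Lipschitz affine isomorphism with constants depending only on $n$ and $k$; so distances, Hausdorff contents, and $m$-volumes of simplices transfer between $P$ and $V_0$ up to constants. I would first aim to find $m+1$ points $z^*_0, \dots, z^*_m \in S := P \cap B_l$ that are \emph{affinely $r$-separated}, meaning $\dist(z^*_i, \operatorname{aff}(z^*_0,\dots,z^*_{i-1})) \ge r$ for each $i$, where $r := c \la^{1/\ep}$ for a small constant $c = c(n,k,\al)$. Then, setting $r' := r/K_1$ for a large constant $K_1$, I would take $T_i(P)$ to be a compact subset of $S \cap \ol{B(z^*_i,r')}$. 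The iterated base-times-height formula then gives $\hau^m(\De(\proj_0(z^*_0),\dots,\proj_0(z^*_m))) \gkb r^m/m! \gkb \la^{m/\ep} = \la^\phi$, and choosing $K_1$ large enough will guarantee that perturbing each $z^*_i$ by at most $r'$ (as when passing to an arbitrary $z_i \in \proj_0(T_i(P))$) only loses a constant factor in the simplex volume, yielding (1).

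For the selection, I would apply Frostman's lemma inside $P$ to obtain a Borel measure $\mu$ supported on $S$ with $\mu(S) \gkb_{n,k,\al} \la$ and $\mu(B(x,\eta)) \le \eta^\al$ for all $x,\eta$. For any $j$-dimensional affine subspace $V \su P$ with $j \le m-1$, the $r$-neighborhood $V^{(r)} \cap [0,1]^n \cap P$ can be covered by $\lkb r^{-j}$ balls of radius $2r$, so $\mu(V^{(r)}) \lkb r^{\al - j} \le r^\ep = c^\ep \la$; choosing $c$ small makes this a tiny fraction of $\mu(S)$, so at each step the $\mu$-mass of $S \setminus V_{i-1}^{(r)}$, where $V_{i-1} := \operatorname{aff}(z^*_0,\dots,z^*_{i-1})$, stays comparable to $\la$, giving plenty of candidates for $z^*_i$.

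The remaining task will be to arrange, simultaneously with affine separation, that $z^*_i$ sits in a dense cluster: $\hau^\al_\infty(S \cap B(z^*_i, r')) \gkb (r')^\al$. Granting this, condition (2) would follow from $\hau^\al_\infty(T_i(P) \cap B_l) \gkb (r')^\al \gkb \la^{\al/\ep} \ge \la^\psi$, using $\al/\ep = (m-1)/\ep + 1 \le mk/\ep + 1 = \psi$ and $\la \le 1$, together with a standard inner-regularity argument to extract the compact subset.

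The hard part will be precisely this density claim: because $\al < k$ is allowed, a Frostman measure does not automatically provide uniform lower-density bounds at scale $r'$, and naive averaging is obstructed by the Frostman bound $\mu(B(x,r')) \le (r')^\al$ running in the wrong direction. I expect that a more delicate Vitali-type or dyadic multi-scale covering argument — exploiting that any cover of $S$ by balls of radius $\le r'$ contributes total $\al$-content at least $\gkb \la$, hence requires at least $\gkb \la(r')^{-\al}$ balls, of which only a small fraction can lie in $V_{i-1}^{(r)}$ — will be needed to produce density points with the desired local content simultaneously with the separation condition, and this is likely what accounts for the length of the deferred proof in Section~\ref{psepar}.
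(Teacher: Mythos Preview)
Your overall architecture matches the paper's: work in $V_0$ via $\proj_0|_P$, find $m+1$ affinely well-separated regions carrying a definite share of $\hau^\al_\infty(P\cap B_l)$, then use a perturbation lemma for (1) and the content bound for (2). But there is a genuine gap in your perturbation step, and you have also misdiagnosed where the difficulty lies.

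\textbf{The perturbation scale is wrong for $m\ge 2$.} You take $r' = r/K_1$ with $K_1$ a constant and claim that perturbing each vertex by $r'$ only costs a constant factor in the $m$-volume. This is false once $m\ge 2$. Concretely, in $\R^2$ take $z_0^*=(0,0)$, $z_1^*=(r,0)$, $z_2^*=(1,r)$: these satisfy your separation condition with parameter $r$ and span a triangle of area $r^2/2$, but moving $z_1^*$ to $(r,r^2)$ (a perturbation of size $r^2\ll r/K_1$ for small $r$) makes the three points collinear. The correct requirement is that the perturbation scale be $\lesssim$ the simplex volume, i.e.\ $r'\lesssim r^m$, not $r'\approx r$; this is exactly the hypothesis of the paper's perturbation lemma (Lemma~\ref{simplexkp}). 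The paper therefore partitions $[0,1]^k$ into cubes of side $r\approx d^m$ where $d\approx \beta^{1/\ep}$ is the separation scale, so that cube-size $\ll$ simplex volume $\approx d^m$.

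\textbf{The ``hard part'' is actually easy once the scale is fixed.} With $r'\approx r^m$ you no longer need a Frostman measure or any multi-scale covering. Work directly with $\nu(A):=\hau^\al_\infty(A\cap B_l)$: partition $[0,1]^k$ into $N\lesssim (r')^{-k}$ cubes $K_j$, set $Q_j=f_P(K_j)$, and use subadditivity $\sum_j\nu(Q_j)\ge\nu(P\cap[0,1]^n)=\beta$. At step $i$, discard the cubes whose centers lie within $d_0\approx d$ of $H_{i-1}=\mathrm{aff}(p_0,\dots,p_{i-1})$; their $f_P$-images sit in a $d$-neighborhood of an $(m-1)$-plane, which has $\nu$-content $\le\beta/2$ by a direct covering estimate (Lemma~\ref{hau}). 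Pigeonhole on the remaining cubes gives one $Q_{j_i}$ with $\nu(Q_{j_i})\gtrsim \beta (r')^k\approx \beta\cdot d^{mk}\approx \beta^{1+mk/\ep}\ge\la^\psi$. This is the paper's greedy argument (Lemma~\ref{terp}); the cubes $T_i=Q_{j_i}$ are automatically compact and satisfy both (1) and (2). Your Frostman route would also work (pigeonhole on $\mu$-mass, then mass distribution to pass back to content), but it is an unnecessary detour and in any case does not deliver the $(r')^\al$ bound you aimed for---only $\la(r')^k$, which however is already $\ge\la^\psi$.
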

We postpone the proof of Lemma \ref{pconst} to Section \ref{psepar}. 

Now we turn to defining a finite set $A$
using the subsets $T_0(P),\dots,T_m(P) \su [0,1]^n \cap P$.

Recall from \eqref{eqj} that $J=J_l=\{i: r_i=2^{-l}\}$, where $r_i$ denotes the side length of the dyadic cube $D_i$, and $B_l \su \bigcup_{i \in J} D_i$. 
Let
\begin{align}
\label{kispi}
A=\big\{& (j_0,j_1,\dots,j_m,i) \in J \times \dots \times J \times \{1,\dots,M\}:  \\
& \nonumber T_r(P_i) \cap B_l \cap D_{j_r} \neq \emptyset \ \forall r=0,\dots,m \big\}. 
\end{align}

We will bound the cardinality of $|A|$ from below and from above, this will yield a dimension estimate for $B$. 

\subsection{Lower bound on $|A|$}
\hfill\\
We will prove that 
\begin{equation}
|A| \gkb_{n,k,\al} \de^{-(s+\al \cdot (m+1))}\cdot \la^{1+\psi \cdot (m+1)},
\label{eqlower}
\end{equation}
where $\psi=1+\frac{mk}{\ep}$. 

To verify \eqref{eqlower} first we fix $i \in \{1,\dots,M\}$, 
we will count how many $(j_0,j_1,\dots,j_k)$'s there are at least such that $(j_0,j_1,\dots,j_k,i) \in A$. 

Fix $r \in \{0,\dots,m\}.$ 
We claim that there are at least $\approx \de^{-\al}\la^{\psi}$ many dyadic $\de$-cubes $D_{j_r}$ such that 
$T_r(P_i) \cap B_l \cap D_{j_r} \neq \emptyset$. To verify this, put
$$A_r=\{j_r \in J: T_r(P_i) \cap B_l \cap D_{j_r} \neq \emptyset \}.$$ 
Then by $$T_r(P_i) \cap B_l \su \bigcup_{j_r \in A_r}  D_{j_r}$$ and \eqref{w2} of Lemma \ref{pconst} we have
$$\la^{\psi} \lkb_{n,k,\al} \hau^{\al}_{\infty}( T_r(P_i) \cap B_l ) \leq |A_r| \cdot (\sqrt{n}\cdot\de)^{\al} \lkb |A_r| \cdot \de^{\al},$$
thus $|A_r| \gkb_{n,k,\al} \de^{-\al}\la^{\psi}$
for all $r=0,\dots,m$.

This implies 
$$|A| \geq M \prod_{r=0}^m |A_r| \gkb_{n,k,\al} M \cdot  \de^{-\al(m+1)}\la^{\psi(m+1)}.$$ 

Then by \eqref{direct} we have 

\begin{equation}
|A| \gkb_{n,k,\al} \de^{-(s+\al(m+1))}\la^{1+\psi(m+1)},
\end{equation}
where $\psi=1+\frac{mk}{\ep}$, and this proves \eqref{eqlower}. 

\subsection{Upper bound on $|A|$}
\hfill\\
We will prove that 
\begin{equation}
| A | \lkb_{n,k,\al} |J|^{m+1} \de^{-(k-m)(n-k)} \la^{-\phi (k+1)(n-k)},
\label{equpper}
\end{equation}
where $\phi= \frac{m}{\ep}$.

Fix $(j_0,\dots,j_m) \in J \times \dots \times J$, we will bound the amount of $i \in \{1,\dots,M \}$ with $(j_0,j_1,\dots,j_m,i) \in A$ from above. 
Let $c_0,\dots,c_m$  denote the centers of the fixed dyadic cubes $D_{j_0},\dots, D_{j_m}$, respectively, and $v_r=\proj_0(c_r)$, $r=0,\dots,m$.
Put 
\begin{equation}
A_{j_0,\dots,j_m}=\big\{i \in \{1,\dots,M \}: T_r(P_i) \cap B_l \cap D_{j_r} \neq \emptyset, r=0,\dots,m \big\}.
\label{eqae}
\end{equation}

First we prove the following easy lemma. 

\begin{lemma}
\label{lem}
If $A_{j_0,\dots,j_m} \neq \emptyset$, then 
$\hau^m(\De(v_0,\dots,v_m)) \gkb \la^{\phi}$,
where $\phi= \frac{m}{\ep}$. 
\end{lemma}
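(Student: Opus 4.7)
The plan is: extract a witness from the non-empty set $A_{j_0, \dots, j_m}$, apply Lemma~\ref{pconst}(1) to its $\proj_0$-images, and transfer the resulting $m$-dimensional volume bound from those images to the simplex $\De(v_0, \dots, v_m)$ via a Lipschitz comparison. The scale separation \eqref{eqscales} between $\de$ and $\la^{\phi}$ is precisely what makes this transfer possible.

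Fix any $i \in A_{j_0, \dots, j_m}$ (one exists by hypothesis) and, for each $r = 0, \dots, m$, choose $x_r \in T_r(P_i) \cap B_l \cap D_{j_r}$ as allowed by \eqref{eqae}. Set $z_r := \proj_0(x_r)$, so $z_r \in \proj_0(T_r(P_i))$. Part~(1) of Lemma~\ref{pconst} applied to $P = P_i$ then yields
$$\hau^m\bigl(\De(z_0, \dots, z_m)\bigr) \gkb_{n,k,\al} \la^{\phi}.$$
Since $x_r$ and the cube center $c_r$ lie in the same dyadic $\de$-cube $D_{j_r}$, and $\proj_0$ is $1$-Lipschitz,
$$|z_r - v_r| \leq |x_r - c_r| \leq \sqrt{n}\,\de \qquad (r = 0, 1, \dots, m).$$

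It therefore suffices to show that the $m$-dimensional volume of an $m$-simplex with vertices in $[0,1]^n$ is Lipschitz in those vertices, with a constant $C_{n,k}$ depending only on $n$ and $k$. Writing $e_i := w_i - w_0$ and using $\hau^m(\De(w_0, \dots, w_m)) = \tfrac{1}{m!} |e_1 \wedge \dots \wedge e_m|$, a telescoping expansion of $(e_1 + \eta_1) \wedge \dots \wedge (e_m + \eta_m) - e_1 \wedge \dots \wedge e_m$ together with the uniform bound $|e_i|, |e_i + \eta_i| \leq \sqrt{n}$ controls the change in the wedge-product norm by an absolute multiple of $\max_i |\eta_i|$. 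Applied here with $|\eta_i| = |(v_i - z_i) - (v_0 - z_0)| \leq 2\sqrt{n}\,\de$, this gives
$$\bigl|\hau^m(\De(z_0, \dots, z_m)) - \hau^m(\De(v_0, \dots, v_m))\bigr| \leq C_{n,k}\,\de.$$

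Combining this with $\de \leq \la^{\phi}/(2^{k+1}k)$ from \eqref{eqscales}, which one may strengthen by enlarging $K$ in \eqref{power} so that $C_{n,k}\,\de$ is at most half the implicit constant from Lemma~\ref{pconst}(1) times $\la^{\phi}$, yields
$$\hau^m\bigl(\De(v_0, \dots, v_m)\bigr) \geq \hau^m\bigl(\De(z_0, \dots, z_m)\bigr) - C_{n,k}\,\de \gkb_{n,k,\al} \la^{\phi},$$
which is the claim. The only non-routine ingredient is the Lipschitz estimate for the simplex volume, and even that is entirely elementary; the real content of the argument is the careful scale-matching between the perturbation $\sqrt{n}\,\de$ and the target $\la^{\phi}$, which was hard-coded into the definitions in \eqref{eqdela} via condition \eqref{power}.
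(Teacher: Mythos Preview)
Your proof is correct and follows the same overall strategy as the paper: pick a witness $i \in A_{j_0,\dots,j_m}$, apply Lemma~\ref{pconst}(1) to obtain points $z_r \in \proj_0(D_{j_r})$ spanning an $m$-simplex of volume $\gkb \la^{\phi}$, then argue that perturbing each $z_r$ to the cube center $v_r$ (a displacement of order $\de$) cannot destroy this lower bound because $\de \ll \la^{\phi}$ by \eqref{eqscales}.

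The difference lies only in how the perturbation step is carried out. The paper invokes the Perturbation Lemma (Lemma~\ref{simplexkp}), which replaces the vertices one at a time via a distance-to-hyperplane argument and yields the explicit factor $2^{-(m+1)}$. You instead use the Lipschitz continuity of $w \mapsto \hau^m(\De(w_0,\dots,w_m))$ on $[0,1]^n$, obtained from the multilinear wedge-product formula, and then absorb the resulting constant $C_{n,k}$ by (if necessary) sharpening the constant in \eqref{power}. Both routes are elementary; yours is slightly more direct for this particular lemma, while the paper's choice has the advantage that Lemma~\ref{simplexkp} is formulated once and reused verbatim in the proof of Lemma~\ref{terp} (inside Lemma~\ref{pconst}), so no ad hoc constant-tuning of \eqref{power} is needed there either.
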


\begin{proof}
Since $T_r(P_i) \cap B_l \cap D_{j_r} \neq \emptyset$ for all $r=0,\dots,m$, we obtain using \eqref{w1} of Lemma \ref{pconst} that there are 
$y_0 \in \proj_0(D_{j_0}),\dots,y_m \in \proj_0(D_{j_m})$ such that $\hau^m(\De(y_0,\dots,y_m)) \gkb \la^{\phi}$, where $\phi= \frac{m}{\ep}$. 
 
We claim that if we perturb the points $y_i$ within the $k$-dimensional $\de$-cube $\proj_0(D_i)$, the measure of the $m$-simplex generated by them remains large. This is the content of the next lemma.  

\begin{lemma}[Perturbation]
\label{simplexkp}
Let $f: (0,1) \to \rr$ be a function such that $x \leq \frac{m}{2^{m+1}k}\cdot f(x)$ for all $0 < x < 1$. 
Let $D_0,\dots,D_m \su [0,1]^k$ be axis-parallel cubes of side length $x$ for some $0 < x < 1$.  
Assume that there are $y_0 \in D_0,\dots,y_m \in D_m$ such that $\hau^m(\De(y_0,\dots,y_m)) \geq f(x)$.
Then for any $z_0 \in D_0,\dots,z_m \in D_m$, $\hau^m(\De(z_0,\dots,z_m)) \geq \frac{f(x)}{2^{m+1}}$. 
\end{lemma}

We postpone the proof of Lemma \ref{simplexkp} to Section \ref{pure}. 

We apply Lemma \ref{simplexkp} for $x=\de$, the $\de$-cubes $\proj_0(D_{j_0}),\dots,\proj_0(D_{j_m})$, for $v_i$ in place of $z_i$, 
and with $f(x)=\frac{1}{\log^{2\phi}_2(1/x)}=\la^{\phi}$. By \eqref{eqscales}, 
$x\leq \frac{1}{2^{k+1}k} f(x) \leq \frac{m}{2^{m+1}k}f(x)$, so the lemma indeed can be applied. 
We obtain that $\hau^m(\De(v_0,\dots,v_m)) \gkb_{\al} \la^{\phi}$ and we are done with the proof of Lemma \ref{lem}. 
\end{proof}

Now we give an upper bound on $|A_{j_0,\dots,j_m}|$. Assume that $A_{j_0,\dots,j_m} \neq \emptyset$. 

Put 
\begin{equation}
\De=\De(v_0,\dots,v_m).
\label{eqadelta}
\end{equation} 
By Lemma \ref{lem}, $\hau^m(\De) \gkb \la^{\phi}$, where $\phi=\frac{m}{\ep}$. 
 
Let $\mathcal{A} \su A(n,k)$ denote the set of all $k$-planes of $\calh$ intersecting $D_r$ for all $r=j_0,\dots,j_m$. That is, 
\begin{equation}
\mathcal{A}=\{P \in \calh: D_{j_r} \cap P \neq \emptyset \ (r=0,\dots,m) \}.
\label{eqcala}
\end{equation}

\begin{remark}
\label{ntm}
Note that by definition, $T_r(P) \su P$ for each $r=1,\dots,m$, thus $\{P_i: i \in A_{j_0,\dots,j_m}\} \su \mathcal{A}$ is a finite $\de$-separated subset. 
\end{remark}

\begin{remark}
\label{remrem}
We remark that the condition $D_{j_r} \cap T_r(P) \cap B_l \neq \emptyset$ was intentionally weakened to $D_{j_r} \cap P \neq \emptyset$. We will use this weaker condition to give an upper bound on $|A_{j_0,\dots,j_m}|$. 
\end{remark}

We will need the following geometrical lemma. 

\begin{lemma}
\label{metricnet}

The metric space $(\mathcal{A},\rho)$ possesses a finite $\ep$-net $\calc \su \cala$ of cardinality $\approx \de^{-(k-m)(n-k)}$ 
with $\ep=\frac{K\de}{\hau^m(\De)}$, where $K$ is a constant depending only on $n,k,\al$.

That is, there exists a collection of $k$-planes 
$\calc \su \cala$ with $|\calc| \approx \de^{-(k-m)(n-k)}$ such that 
for any  $P \in \calh$ with $D_{j_r} \cap P \neq \emptyset$ for all $r=0,\dots,m$,
 $$P \in \bigcup_{C \in \calc} B_{\rho}\left(C, \frac{K\de}{\hau^m(\De)}\right),$$
where  $K$ is a constant depending only on $n,k,\al$.
\end{lemma}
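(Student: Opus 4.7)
The plan is to parametrize $\calh$ by affine maps and reduce the claim to a linear-algebra fact about interpolation at the vertices $v_0,\dots,v_m$ of $\De$. Each $P\in\calh$ is the graph of a unique affine map $\phi_P\colon V_0\to V_0^\perp$, and the correspondence $P\mapsto\phi_P$ is bi-Lipschitz between $(\calh,\rho)$ and the $(k+1)(n-k)$-dimensional affine-map space $\mathcal{F}$ equipped with its standard norm, uniformly on the bounded subset where the $k+1$ intercepts lie in $[0,1]^n$. I would first rewrite the condition $D_{j_r}\cap P\neq\emptyset$ in terms of $\phi=\phi_P$: writing the center $c_r=(v_r,w_r)$ with $v_r\in V_0$, $w_r\in V_0^\perp$, and using that on $\calh$ the Lipschitz constants of $\phi$ are uniformly bounded, $D_{j_r}\cap P\neq\emptyset$ is equivalent, up to an $O(\de)$ multiplicative constant, to the linear condition $\phi(v_r)\in B(w_r,C\de)$. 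Thus $\cala$ corresponds in $\mathcal{F}$, up to bounded factors, to the preimage $T^{-1}\bigl(B(w_0,C\de)\times\cdots\times B(w_m,C\de)\bigr)$ of the evaluation operator $T\colon\mathcal{F}\to(V_0^\perp)^{m+1}$, $T\phi=(\phi(v_0),\dots,\phi(v_m))$.

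The main step, which I expect to be the hard part and which is the content postponed to Section~\ref{linalg}, is a linear-algebra estimate: since $v_0,\dots,v_m$ are affinely independent (as $\hau^m(\De)>0$), $T$ is surjective with $\dim\ker T=(k-m)(n-k)$; moreover one can choose an algebraic complement $W\su\mathcal{F}$ of $\ker T$ with $(T|_W)$ invertible and $\|(T|_W)^{-1}\|\lkb_{n,k}1/\hau^m(\De)$. The intuition is that $(T|_W)^{-1}$ is essentially Lagrange/affine interpolation at the vertices of a simplex of $m$-volume $\hau^m(\De)$, whose condition number is governed by the reciprocal of that volume (a quantitative Cramer-rule type bound). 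Combining this with the previous paragraph, $\cala$ is contained (in the $\phi$-parametrization) in a tube $\phi_0+\bigl(\bigl(\ker T\bigr)_{\mathrm{bdd}}+B_W(0,K\de/\hau^m(\De))\bigr)$, where $\phi_0$ is any particular solution and $(\ker T)_{\mathrm{bdd}}$ is the bounded intersection of $\phi_0+\ker T$ with the parameter region corresponding to $\calh$.

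To finish, I would take an $\ep$-net with $\ep=K\de/\hau^m(\De)$ in $(\ker T)_{\mathrm{bdd}}$; since this set has diameter $\lkb_k 1$, it needs at most $\lkb_{n,k}\ep^{-(k-m)(n-k)}$ points, and using $\hau^m(\De)\lkb_k 1$ (as $v_r\in[0,1]^k$) this is bounded by $\lkb_{n,k,\al}\de^{-(k-m)(n-k)}$. Pulling these back via $\phi\mapsto P_\phi$ gives an $\ep$-net (in $\rho$) of the subset of $\calh$ that corresponds to the tube, and in particular an $\ep$-net of $\cala$. Finally, to ensure $\calc\su\cala$ as required, I would replace each net point by a nearest element of $\cala$, absorbing the resulting factor of $2$ into the constant $K$.
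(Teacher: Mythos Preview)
Your proposal is correct and follows essentially the same route as the paper: both parametrize $\calh$ by the affine map $\phi_P$ (the paper's ``code'' $x(P)$), recognize that the constraints $\phi(v_r)\approx w_r$ leave a $(k-m)(n-k)$-dimensional fiber, and control distances via a Cramer-rule type bound governed by $\hau^m(\De)$. The only cosmetic difference is that the paper makes your abstract decomposition $\mathcal{F}=\ker T\oplus W$ explicit by completing $v_0,\dots,v_m$ to a full $k$-simplex with extra vertices $e_{i_1},\dots,e_{i_{k-m}}$ (Lemma~\ref{pontok}), takes a $\de$-grid at those extra evaluation points (so $\calc\subset\cala$ holds automatically), and obtains the condition-number estimate as Lemma~\ref{rigid}.
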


We postpone the proof of Lemma \ref{metricnet} to Section \ref{linalg}. 

By Remark \ref{ntm} it is clear that 
$$B_{\rho}\left(P_i,\frac{\de}{3} \right) \cap B_{\rho}\left(P_j,\frac{\de}{3} \right) = \emptyset$$ 
for any $i,j \in A_{j_0,\dots,j_m}$ with $i \neq j$. 

Moreover, Lemma \ref{metricnet} implies that 
\begin{equation}
\bigcup_{i \in A_{j_0,\dots,j_m}} B_{\rho}\left(P_i,\frac{\de}{3} \right) \su 
\bigcup_{C \in \calc} B_{\rho}\left(C,   \frac{K \cdot \de}{\hau^m(\De)} + \frac{\de}{3} \right) \su A(n,k).
\label{ballunion}
\end{equation}

It is easy to see that there is a natural Radon measure $\ga$ on $A(n,k)$ such that the measure of any small enough $\de$-ball (in a natural metric) is comparable with 
$\de^{(k+1)(n-k)}$, that is, there exist $C_1,C_2$ such that 
$$C_1 \de^{(k+1)(n-k)} \leq \ga(B_{\rho}(P,\de)) \leq C_2 \de^{(k+1)(n-k)}.$$

This and \eqref{ballunion} imply 
$$|A_{j_0,\dots,j_m}| \cdot \de^{(k+1)(n-k)} \lkb |\calc|\frac{\de^{(k+1)(n-k)}}{\hau^m(\De)^{(k+1)(n-k)}},$$

thus
\begin{equation}
|A_{j_0,\dots,j_m}| \lkb |\calc| \cdot \frac{1}{\hau^m(\De)^{(k+1)(n-k)}} \lkb \frac{1}{\de^{(k-m)(n-k)}} \cdot \frac{1}{\hau^m(\De)^{(k+1)(n-k)}}.
\label{eqabe}
\end{equation}

We obtained that for any fixed dyadic cubes $D_{j_0},\dots, D_{j_m}$, $|A_{j_0,\dots,j_m}|$ can be bounded above as in \eqref{eqabe}, 
and clearly, 

$$|A| \leq \prod_{r=0}^{m}|\{j_r \in J\}| \cdot |A_{j_0,\dots,j_m}|,$$
thus by \eqref{eqabe} and Lemma \ref{lem}, 

$$|A|  \lkb |J|^{m+1}\frac{1}{\de^{(k-m)(n-k)}} \cdot \frac{1}{\hau^m(\De)^{(k+1)(n-k)}} \leq $$
$$|J|^{m+1} \frac{1}{\de^{(k-m)(n-k)}} \cdot \frac{1}{\la^{\phi (k+1)(n-k)}}
$$ 
with $\phi=\frac{m}{\ep}$, which proves \eqref{equpper}. 

Now we combine \eqref{eqlower} and \eqref{equpper} to obtain 
$$\de^{-(s+\al \cdot (m+1))}\la^{1+ \psi \cdot (m+1)} 
\lkb_{n,k,\al} |A| \lkb_{n,k,\al} |J|^{m+1} \cdot \de^{-(k-m)(n-k)} \cdot \la^{-\phi (k+1)(n-k)},$$ 
where $\psi=1+\frac{mk}{\ep}, \phi=\frac{m}{\ep}$. 
Thus 
\begin{equation}
\label{eqcomb}
|J| \gkb_{n,k,\al}  \de^{-(\frac{s-(k-m)(n-k)}{m+1}+\al)}\la^{\frac{1}{m+1}+ \psi +\phi \frac{(k+1)(n-k)}{m+1}} =
\de^{-(\frac{s-(k-m)(n-k)}{m+1}+\al)}\la^{\zeta} 
\end{equation}
with $\zeta=\frac{1}{m+1}+ \psi +\phi \frac{(k+1)(n-k)}{m+1}=\frac{1}{m+1}+ 1+\frac{mk}{\ep} +\frac{m}{\ep} \frac{(k+1)(n-k)}{m+1}$. 

Now we turn to estimating the $u$-dimensional net measure of $B$, where $u=\al+\frac{s-(k-m)(n-k)}{m+1}-\ga$ for some fixed $\ga>0$. Then \eqref{eqcomb} implies that 
$$\sum_{i=1}^{\infty} (\diam(D_i))^u \geq \sum_{i \in J} (\diam(D_i))^u \gkb \sum_{i \in J} \de^u 
= |J| \de^u \gkb \de^{-\ga}\la^{\zeta}.
$$

By \eqref{eqdela}, this means, 
$$\sum_{i=1}^{\infty} (\diam(D_i))^u \gkb 2^{l\cdot \ga}l^{-2\zeta}.$$
Then we have 
$$\inf_{\substack{B \su \bigcup_{i=1}^{\infty} D_i, r_i \leq 2^{-K} }} 
\sum_{i=1}^{\infty} (\diam(D_i))^u \gkb
\inf_{\rc \ge K} 2^{l \cdot \ga}l^{-2\zeta} \gkb_{\ga} 1$$
proving that $\net^u(B)>0$, and we are done with the proof of Theorem \ref{thm2}, subject to Lemma \ref{pconst}, \ref{simplexkp}, and \ref{metricnet}. 
The proofs of Lemma \ref{pconst} and \ref{metricnet} are contained in the next two sections, and the proof of Lemma \ref{simplexkp} is contained in the last section. 

\section{The proof of Lemma \ref{pconst}}
\label{psepar}
For an arbitrary $k$-plane $P \in \calh$, use the natural parametrization 
$P=\{(t,g_P(t)): t \in \rr^k \}$ with $t=(t_1,\dots,t_k), g_P(t)=a^0(P)+t_1b^1(P)+\dots+t_kb^k(P).$ 
It is easy to see that for each $P \in \calh$, $f_P: \rr^k \to P, t \mapsto (t,g_P(t))$ is bi-Lipschitz, moreover, the Lipschitz constants are universally bounded. 
This means, there exist $L_1, L_2>0$ (depending only on $n,k$) such that 
\begin{equation}
\forall t_1,t_2 \in \rr^k, \forall P \in \calh, \ L_1 |t_1-t_2| \leq |f_P(t_1)-f_P(t_2)| \leq L_2 |t_1-t_2|. 
\label{eqbilip}
\end{equation}

Now we fix a $k$-plane $P \in \ti{E} \su \calh$, put $\ib=\hau_{\infty}^{\al} (P \cap B_l)$, 
and for an arbitrary $A \su \rr^n$, let $\ih(A)=\hau_{\infty}^{\al} (A \cap B_l)$. 
Recall that for any points $z_0,\dots,z_m \in [0,1]^n$,  $\De(z_0,\dots,z_m)$ denotes their convex hull, 
and that $\proj_0$ denotes the orthogonal projection onto the horizontal $k$-plane $V_0$.
Moreover, recall from \eqref{eqmep} that $m=\lceil \al \rceil \leq k$, and  $\ep = \al-(m-1) >0$. 

We will prove the following lemma, which is a core element of the proof. 
The idea for the construction, and thus a considerable part of the proof is due to Tam\'as Terpai. 

\begin{lemma}
\label{terp}
There exist compact sets $T_0,\dots,T_m \su P \cap [0,1]^n$ with the following properties: 
\begin{enumerate}[(i)]

	\item $\proj_0(T_i)$ is an axis-parallel cube of side length $ \approx \ib^{\phi}$ for all $i=0,\dots, m$,
	\label{te1}
	
	\item $\ih(T_i) \gkb \ib^{\psi}$ for all $i=0,\dots,m$, 
	\label{te2}
	
	\item for any $z_0 \in \proj_0(T_0),\dots,z_m \in \proj_0(T_m)$, $\hau^m(\De(z_0,\dots,z_m)) \gkb \ib^{\phi}$,
	\label{te3}
\end{enumerate}

where $\psi=1+\frac{mk}{\ep}, \phi=\frac{m}{\ep}$.
\end{lemma}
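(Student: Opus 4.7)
The plan is to transfer the problem from $P$ to the horizontal coordinate space $V_0\cong\rr^k$ via the bi-Lipschitz parametrization $f_P$, carry out the construction there, and then lift. Let $B'=\proj_0(P\cap B_l)\su [0,1]^k$. By \eqref{eqbilip} the map $f_P$ is bi-Lipschitz with constants depending only on $n,k$, so $\hau^{\al}_{\infty}(B')\gkb_{n,k}\ib$. Moreover $B'$ lies in $\proj_0(P\cap[0,1]^n)$, and for any axis-parallel cube $Q\su\proj_0(P\cap[0,1]^n)$ we have $\hau^{\al}_{\infty}(f_P(Q)\cap B_l)\gkb_{n,k,\al}\hau^{\al}_{\infty}(Q\cap B')$. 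So it suffices to construct axis-parallel cubes $Q_0,\dots,Q_m\su[0,1]^k$ of side length $\approx \ib^{\phi}$ such that each $\hau^{\al}_{\infty}(Q_j\cap B')\gkb \ib^{\psi}$ and such that the centers $c_j$ form an $m$-simplex of $\hau^m$-measure $\gkb \ib^{\phi}$; then take $T_j=f_P(\ol{Q_j})\cap[0,1]^n$ and invoke the perturbation lemma (Lemma~\ref{simplexkp}) to pass from centers to arbitrary $z_j\in Q_j$.

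Fix the two scales $r=c_1\ib^{1/\ep}=c_1\ib^{\phi/m}$ and $s=c_2\ib^{\phi}$, with $c_1,c_2>0$ small constants depending on $n,k,\al$ and $c_2\ll c_1$; note $s\leq r$ since $\phi\geq \phi/m$. Build $c_0,\dots,c_m$ inductively. Having chosen $c_0,\dots,c_{j-1}$, let $W_j$ be their affine span (a $(j{-}1)$-flat, or $\emptyset$ if $j=0$). The tube $N_r(W_j)\cap[0,1]^k$ is covered by $\lkb r^{-(j-1)}$ balls of radius $\sqrt k\cdot r$, hence
\[
\hau^{\al}_{\infty}(N_r(W_j)\cap[0,1]^k)\lkb_{n,k} r^{\al-(j-1)}\leq r^{\ep},
\]
using $j\leq m$ and $\al=(m-1)+\ep$. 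Taking $c_1$ small forces $r^{\ep}\leq \tfrac12\hau^{\al}_{\infty}(B')$, so $\hau^{\al}_{\infty}(B'\se N_r(W_j))\gkb \ib$. Partition (the bounded safe region containing) $[0,1]^k$ into axis-parallel cubes of side $s$; by countable subadditivity of $\hau^{\al}_{\infty}$ one such cube $Q_j$ satisfies
\[
\hau^{\al}_{\infty}(Q_j\cap(B'\se N_r(W_j)))\gkb \ib\cdot s^k\gkb \ib^{1+mk/\ep}=\ib^{\psi}.
\]
Let $c_j$ be the center of $Q_j$; since some point of $Q_j$ lies outside $N_r(W_j)$ and $\diam Q_j\leq s\sqrt k\ll r$, we get $\dist(c_j,W_j)\geq r/2$. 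In particular the $c_j$ are affinely independent and $W_{j+1}$ is a genuine $j$-flat.

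Properties (\ref{te1}) and (\ref{te2}) are immediate from the construction and the bi-Lipschitz transfer. For (\ref{te3}), the simplex with vertices at the centers has
\[
\hau^m(\De(c_0,\dots,c_m))=\frac{1}{m!}\prod_{j=1}^m\dist(c_j,\mathrm{aff}(c_0,\dots,c_{j-1}))\geq \frac{1}{m!}\left(\frac{r}{2}\right)^m\gkb_{n,k,\al}\ib^{\phi}.
\]
Apply Lemma~\ref{simplexkp} with $f\equiv$ const$\cdot\ib^{\phi}$ and side length $s$: the hypothesis $s\leq \frac{m}{2^{m+1}k}f(s)$ is precisely $c_2\leq\frac{m\cdot\text{const}}{2^{m+1}k}$, which is arranged by choosing $c_2$ small. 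The conclusion gives $\hau^m(\De(z_0,\dots,z_m))\gkb \ib^{\phi}$ for every $z_j\in Q_j=\proj_0(T_j)$, yielding (\ref{te3}). A minor point: to ensure $T_j\su[0,1]^n$, the cubes $Q_j$ should be chosen inside $\proj_0(P\cap[0,1]^n)$, which contains the standard simplex of $\rr^k$ (of positive $k$-measure) by the definition of $\calh$; either restricting the pigeonhole to this region or shrinking $Q_j$ by a factor handles this without affecting the estimates.

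The key obstacle is the simultaneous calibration of $s$ and $r$: the tube-removal step requires $r^{\ep}\ll \ib$, i.e.\ $r\gkb \ib^{1/\ep}$, while the perturbation step of (\ref{te3}) demands $s\lkb \ib^{\phi}$, and we must still pigeonhole enough content ($\gkb \ib^{\psi}=\ib\cdot s^k$) into a cube of side $s$. The hypothesis $\ep>0$ (i.e.\ $\al>m-1$) is essential: for $j=m$ the tube $N_r(W_m)$ is $(m{-}1)$-dimensional and has $\al$-content $\lkb r^{\ep}$; if $\ep$ were zero or negative this content could not be made small compared to $\ib$, and the inductive step would collapse at the last stage. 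Together with the exponent bookkeeping $\phi=m/\ep$, $\psi=1+mk/\ep$, this is what pins down the scales.
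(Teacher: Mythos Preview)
Your proposal is correct and follows essentially the same approach as the paper: transfer to $V_0$ via the bi-Lipschitz parametrization $f_P$, use the fact that an $r$-tube around any $(j{-}1)$-flat with $j\le m$ has $\al$-content $\lkb r^{\ep}$ (this is the paper's Lemma~\ref{hau}), then at each inductive step remove such a tube and pigeonhole into axis-parallel cubes of side $\approx \ib^{\phi}$, and finally invoke Lemma~\ref{simplexkp}. The paper organizes the pigeonhole as a greedy selection of the cube of \emph{maximal} $\ih$-mass among those whose centers are $d_0$-far from the current flat, but the resulting bound is the same subadditivity estimate you use; your two scales $r,s$ are exactly the paper's $d_0$ and $r$, and your exponent bookkeeping matches.
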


\begin{proof}
The construction of the sets $T_0,\dots, T_m$ is based on a suitable greedy method. 

We will use the following easy lemma. Recall that for any $V \su \rr^n$ and $d>0$, $V_d$ denotes the open $d$-neighborhood of $V$. 

\begin{lemma}
\label{hau}
For all $(m-1)$-dimensional affine subspace $V \su \rr^n$, $\ih(V_d) \leq \frac{\ib}{2}$, where $0<d=\ib^{\frac{1}{\ep}}C$ with some constant $C$ depending only on $n,\al$. 
\end{lemma}

\begin{proof}
Since for any $d>0$ and any $(m-1)$-dimensional subspace $V$, $V_d \cap B_l$ can be covered by $\approx (\frac{1}{d})^{m-1}$ many 
$(m-1)$-dimensional cubes of side length $d$, 
 we have 
$$\ih(V_d) \leq \frac{K}{d^{m-1}} \cdot d^{\al} n^{\al/2} =  d^{\ep} \cdot K'$$ 
for some constants $K,K'$ depending only on $n,\al$. 
We obtain $\ih(V_d) \leq \frac{\ib}{2}$ for $d = \ib^{\frac{1}{\ep}}C$, where 
$\ep=\al - (m-1)>0$, and $C$ is a constant depending only on $n,\al$ so we are done. 
\end{proof}

Fix $d$ from Lemma \ref{hau}. Note that clearly, $d \leq \ib \leq \hau^{\al}_{\infty} (B)$ which is at most $1$ by \eqref{bounded} of Lemma \ref{ass}. 
Put 
\begin{equation}
d_0=\frac{d}{2L_2} \leq d, 
\label{eqd0}
\end{equation}
where $L_2$ is from \eqref{eqbilip}. 
We fix another parameter 
\begin{equation}
r= \frac{d_0^m}{2^{k+1}\cdot k \cdot m!} \approx d^m.
\label{eqrdef}
\end{equation}

Divide $[0,1]^n \cap V_0$ into axis-parallel $k$-dimensional cubes of side length $r$, so $[0,1]^n \cap V_0=\bigcup_{j=1}^N K_j,$
 where $K_j$ is an axis-parallel closed $r$-cube in $V_0$, except from a few cubes near the boundary, which might be smaller, 
$N=(\lceil \frac{1}{r} \rceil)^k \leq \frac{2^k}{r^k}$ by $r \leq 1$. 
For each $K_j$ we denote its center with $c_j$. 
Put 
\begin{equation}
Q_j=f_P(K_j) \su P
\label{eqqj}
\end{equation}
for $j=1,\dots,N$. 

We will choose $T_0=Q_{j_0},\dots,T_m=Q_{j_m}$ from the set of $Q_j$'s in an appropriate greedy way with the following properties: 

\begin{enumerate}[(I)]
	\item $\ih(T_i) \geq \ib \frac{r^k}{2 \cdot 2^k}$ for all $i=0,\dots,m$, 
	\label{ll1}
	\item for each $i=2,\dots,m$, $d(p_i, H_{i-1}) \geq d_0$, where 
	$H_{i-1}$ denotes the affine subspace generated by the centers $p_0,\dots,p_{i-1}$ of $K_{j_0},\dots,K_{j_{i-1}}$. 
	\label{ll2}
\end{enumerate}

We will define $T_0$ to be $Q_{j_0}=f_P(K_{j_0})$ with the largest $\ih$-measure (which might be infinite). Let 
$$a_0=\max\{\ih(Q_j): j \in \{1,\dots,N\}\},$$ 
where we use the convention that $\max\{n_1,\dots,n_k,\infty\}=\infty$ for any real numbers $\{n_i\}_{i=1}^k$. 
Let $K_{j_0}$ be a cube such that  
$\ih(Q_{j_0})=a_0$, where $Q_{j_0}=f_P(K_{j_0})$ defined in \eqref{eqqj}, and put $T_0=Q_{j_0}$. Note that $\proj_0(T_0)=K_{j_0}$. The center of $K_{j_0}$ will be denoted with $p_0$. 
Clearly, $\ih(T_0)=a_0 \geq \ib \frac{r^k}{2^k}$, otherwise we would get 
$$\ih([0,1]^n \cap P)=\ih(f_P([0,1]^n \cap V_0)) \leq \sum_{j=1}^N \ih(f_P(K_j))< \frac{2^k}{r^k} \cdot \ib \frac{r^k}{2^k} =\ib,$$ 
which is a contradiction. Then \eqref{ll1} is clear for $i=0$. 

In the next step we will choose $T_1$ to be $Q_{j_1}$ which has the largest $\ih$-measure of those $Q_j$'s whose projections are at least $d_0$-away from $p_0$. 
Let 
$$a_1=\max\{\ih(Q_j): j \in \{1,\dots,N\}, d(c_j, p_0) \geq d_0 \},$$ 
choose a cube $K_{j_1}$ with 
$\ih(Q_{j_1})=a_1$, and put $T_1=Q_{j_1}$. 
The center of $K_{j_1}$ will be denoted with $p_1$. 
We claim that $\ih(T_1)=a_1 \geq \ib \frac{r^k}{2 \cdot 2^k}$. 

Let $D$ denote the $k$-dimensional ball $B(p_0,2d_0) \cap V_0$. 

By \eqref{eqbilip} and \eqref{eqd0}, $f_P(D)$ is contained in the ball $B(f_P(p_0),d) \cap P$, 
thus also contained in the $d$-neighborhood of some $(m-1)$-dimensional subspace $V$, so 
$\ih(f_P(D)) \leq \ih(V_{d}) \leq \frac{\ib}{2}$ by Lemma \ref{hau}. Clearly, 
\begin{align*}
\ih([0,1]^n \cap P) & \leq \ih(f_P(([0,1]^n \cap V_0) \setminus D)) + \ih(f_P(D)).
\end{align*}

We claim that 
\begin{equation}
([0,1]^n \cap V_0) \setminus D \su \bigcup_{j: d(c_j, p_0) \geq d_0} K_j.
\label{eqcov}
\end{equation}
Indeed, by \eqref{eqrdef}, we have $r < \frac{d_0}{\sqrt{k}}$, which implies that 
$$B(p_0,d_0+\sqrt{k}r) \cap V_0 \su D,$$
and it is easy to see that this implies the above claim. 

Then $a_1 < \ib \frac{r^k}{2 \cdot 2^k}$ would imply that 
\begin{align*}
\ih([0,1]^n \cap P) \leq \sum\limits_{j: d(c_j, p_0) \geq d_0} \ih(f_P(K_j)) + \ih(f_P(D)) < \frac{2^k}{r^k} \cdot \ib \frac{r^k}{2 \cdot 2^k} +  \frac{\ib}{2} =\ib,
\end{align*}
which is a contradiction. So we verified \eqref{ll1} for $i=1$. 

We proceed using induction on $i \in \{0,1,\dots,m \}$. 
Assume that $T_0,\dots, T_{i-1}$ are defined, $\ih(T_l) \geq \ib \frac{r^k}{2 \cdot 2^k}$ for each $l=0,\dots,{i-1}$, and 
$d(p_{l}, H_{l-1}) \geq \de$ for each $l=2,\dots,{i-1}$, where $p_{l}$ denotes the center of $K_{j_l}$, and $H_{l-1}$ 
denotes the affine subspace generated by the centers $p_0,\dots,p_{l-1}$ of $K_{j_0},\dots,K_{j_{l-1}}$.  
Let $H_{i-1}$ denote the affine subspace generated by  $p_0,\dots,p_{i-1}$.
Then we put 
$$a_{i}=\max\{\ih(Q_j): j \in \{1,\dots,N\}, d(c_j, H_{i-1}) \geq d_0 \},$$ choose a cube $K_{j_i}$ with 
$\ih(Q_{j_i})=a_i$, and put $T_{i}=Q_{j_i}$. We claim that $\ih(T_{i})=a_{i} \geq \ib \frac{r^k}{2 \cdot 2^k}$. 
Indeed, by $i \leq m$, by \eqref{eqbilip}, the $f_P$-image of the $2d_0$-neighborhood of $H_{i-1}$ is contained in the $d$-neighborhood of some $(m-1)$-dimensional subspace $V$, so 
$\ih((H_{i-1})_{2d_0}) \leq \ih(V_{d}) \leq \frac{\ib}{2}$ by Lemma \ref{hau}. 
Clearly, 
$$\ih([0,1]^n \cap P) \leq \ih(f_P(([0,1]^n \cap V_0)  \setminus (H_{i-1})_{2d_0})) + \ih(f_P((H_{i-1})_{2d_0}).$$
and similarly as in \eqref{eqcov}, we obtain that 
$$([0,1]^n \cap V_0) \setminus (H_{i-1})_{2d_0} \su \bigcup_{j: d(c_j, H_{i-1}) \geq d_0} K_j,$$
thus $a_{i+1} < \ib \frac{r^k}{2 \cdot 2^k}$ would imply that 

$$\ih([0,1]^n \cap P) \leq \sum\limits_{j: d(c_j,  H_{i-1}) \geq d_0} \ih(f_P(K_j)) + \ih(f_P((H_{i-1})_{2d_0})) 
< \frac{2^k}{r^k} \cdot \ib \frac{r^k}{2 \cdot 2^k} +  \frac{\ib}{2} =\ib,$$ 
which is a contradiction. So \eqref{ll1} holds for $i$. 

Property \eqref{ll2} is also clearly satisfied by the definition of $T_{i}$. 

Now we proceed with the proof of Lemma \ref{terp}. 
We claim that $T_0,\dots,T_m$ defined above are suitable for Lemma \ref{terp}. 
Clearly, since $T_r$ is the Lipschitz image of a closed cube, it is compact for each $r=0,\dots,m$.  
Recall that $d=\ib^{\frac{1}{\ep}}C$ with some constant $C$ depending only on $n,\al$. 

The side length of $\proj_0(T_i)$ is $r \approx d^m \approx \ib^{\phi}$ with $\phi=\frac{m}{\ep}$ for each $i=0,\dots,m$, so property \eqref{te1} of Lemma \ref{terp} is clear. 

By \eqref{ll1}, 
$$\ih(T_i)\geq \ib \frac{r^k}{2 \cdot 2^k} \approx \ib \cdot d^{mk} \approx \ib^{\psi}$$ with $\psi=1+\frac{mk}{\ep}$ for each $i=0,\dots,m$, so property \eqref{te2} of Lemma \ref{terp} is also clear.

Now we verify property \eqref{te3} of Lemma \ref{terp}. By construction, it is clear that 
$\hau^m(\De(p_0,\dots,p_m)) \geq \frac{d_0^m}{m!}$, where $p_i$ denotes the center of the cube $K_{j_i}$ for each $i$. 
Now we apply Lemma \ref{simplexkp} for $x=r, f(x)=\frac{d_0^m}{m!}$, and the axis-parallel $r$-cubes $K_{j_0},\dots,K_{j_m}$. By 
\eqref{eqrdef}, we have $x \leq \frac{1}{2^{k+1}k}f(x) \leq \frac{m}{2^{m+1}k}f(x)$, so the lemma indeed can be applied, and then for any 
$z_0 \in K_{j_0}, \dots, z_m \in K_{j_m}$, $\hau^m(\De(z_0,\dots,z_m)) \geq \frac{d_0^m}{2^{m+1}m!} \gkb d^m$. 

That is, for any $z_0 \in K_{j_0}=\proj_0(T_0), \dots, z_m \in K_{j_m}=\proj_0(T_m)$, 
$$\hau^m(\De(z_0,\dots,z_m)) \gkb d^m \gkb \ib^{\phi}$$ with $\phi=\frac{m}{\ep}$ and property \eqref{te3} of Lemma \ref{terp} is verified, so we are done with the proof of Lemma \ref{terp}. 
\end{proof}

Now we finish the proof of Lemma \ref{pconst}. Take $T_0=T_0(P),\dots,T_m=T_m(P)$ obtained from Lemma \ref{terp}. 
By \eqref{est}, \eqref{eqdela}, and \eqref{te2} of Lemma \ref{terp}, 
$$\ih(T_i(P))=\hau^{\al}_{\infty}(B_l \cap T_i(P)) \gkb \ib^{\psi} \gkb \la^{\psi}$$
for each $i=0,\dots,m$, where $\psi=1+\frac{mk}{\ep}$, so \eqref{w2} of Lemma \ref{pconst} is verified. 

By \eqref{te3} of Lemma \ref{terp}, we also have that for any $z_0 \in \proj_0(T_0),\dots,z_m \in \proj_0(T_m)$, 
$$\hau^m(\De(z_0,\dots,z_m)) \gkb \ib^{\phi} \geq \la^{\phi},$$ 
where $\phi=\frac{m}{\ep}$, so \eqref{w1} of Lemma \ref{pconst} is also verified. 

This means, we are done with the proof of Lemma \ref{pconst}. 

\section{The proof of Lemma \ref{metricnet}}
\label{linalg}

Recall that $e_0=(0,\dots,0)$; $e_1, \dots, e_n$ are the standard basis vectors of $\rr^n$, 
 $V_0$ is the $k$-dimensional linear space generated by $e_1, \dots, e_k$, $H_0=V_0^{\bot}$,   
$H_i=e_i + H_0$, $i=1,\dots,k$, and  
$$\calh=\{P \in A(n,k): P \cap H_i=\{x_i\} \ \text{for some} \ x_i \in [0,1]^n \ \forall \ i=0,1,\dots,k\}.
$$
For our convenience, we define a new metric on $\calh$. For $P \in \calh$, let 
$P=\{(t,g_P(t)): t \in \rr^k \}$ with $t=(t_1,\dots,t_k), g_P(t)=a^0(P)+t_1b^1(P)+\dots+t_kb^k(P)$ be the standard parametrization.

Note that by definition, $P \cap H_0=\{(0,a^0(P))\}$, where $0 \in \rr^k$, $a^0(P) \in \rr^{n-k}$, 
and if $P \cap H_i=\{(1^i,a^i(P))\}$, where $1^i=\proj_{\rr^k}(e_i) \in \rr^k, a^i(P) \in \rr^{n-k}$, then 
$b^i(P)=a^i(P)-a^0(P) \in \R^{n-k}$ for each $i=1,\dots,k$
Put 
$$x(P)=(a^0(P),b^1(P),\dots,b^k(P))=(a(P),b(P)) \in \rr^{(k+1)(n-k)}.$$
Clearly, $P \to x(P)$ is well defined and injective on $\calh$. 
We say that $x(P)$ is the ,,code" of $P$, and $\rr^{(k+1)(n-k)}$ is the ,,code space". 

We will use the maximum metric on $\rr^{(k+1)(n-k)}$. 
This means, let $$\|x(P)-x(P') \|=\max(\|a(P)-a(P')\|, \|b(P)-b(P')\|),$$ where
$$\|a(P)-a(P')\|=\max\limits_{j=1,\dots,n-k} |a_j^0(P)-a_j^{0}(P')|,$$ 
\begin{equation}
\label{cb}
\|b-b'\|=\max\limits_{j=1,\dots,n-k} \left(\max\limits_{i=1,\dots,k} |b_j^i(P)-b_j^{i}(P')| \right).
\end{equation}

Put 
\begin{equation}
d(P,P')=\|x(P)-x(P') \|. 
\label{eqdmet}
\end{equation}

We will prove the following lemma, which easily implies Lemma \ref{metricnet}. 

\begin{lemma}
\label{codelinalg}
 The metric space $(\mathcal{A},d)$ possesses a finite $\ep$-net $\calc \su \cala$ of cardinality $\approx \de^{-(k-m)(n-k)}$ 
with $\ep=\frac{K\de}{\hau^m(\De)}$, where $K$ is a constant depending only on $n,k,\al$.

That is, there exists a collection of $k$-planes 
$\calc \su \cala$ with $|\calc| \approx \de^{-(k-m)(n-k)}$ such that 
for any $P \in \cala$,
 $$x(P) \in \bigcup_{C \in \calc} B_{\| \cdot \|}\left(x(C), \frac{K\de}{\hau^m(\De)}\right),$$
where $K$ is a constant depending only on $n,k,\al$.
\end{lemma}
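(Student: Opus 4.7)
The plan is to pass to coordinates on the code space $\rr^{(k+1)(n-k)}$ adapted to the simplex $\De=\De(v_0,\dots,v_m)$, in which the constraint ``$P$ meets all of $D_{j_0},\dots,D_{j_m}$'' becomes an explicit linear system. Choose an orthonormal basis $u_1,\dots,u_k$ of $V_0$ such that $u_1,\dots,u_m$ span $\mathrm{aff}(v_0,\dots,v_m)-v_0$ and $u_{m+1},\dots,u_k$ span its orthogonal complement in $V_0$. For $P\in\calh$ with linear part $L_P$ of $g_P$, introduce new coordinates
\[(w_0(P),\tilde b^1(P),\dots,\tilde b^k(P))\in H_0\times (H_0)^k,\qquad w_0(P)=g_P(v_0),\quad \tilde b^i(P)=L_P(u_i).\]
Since $\{u_i\}$ is orthonormal and $|v_0|\leq\sqrt k$, the linear change of variables $(a^0,b^1,\dots,b^k)\leftrightarrow(w_0,\tilde b^1,\dots,\tilde b^k)$ is bi-Lipschitz with constants depending only on $n,k$.

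Next I will translate $P\in\cala$ into these coordinates. For each $r$, any point of $P\cap D_{j_r}$ projects to some $t^r\in\proj_0(D_{j_r})$ with $|t^r-v_r|\lkb\de$, and $\|L_P\|\lkb 1$ follows from $P\in\calh$ (which forces $|b^i(P)|\leq 1$); therefore $g_P(v_r)=\proj_{H_0}(c_r)+O(\de)$. Writing $v_r-v_0=\sum_{i=1}^m c_{r,i}u_i$ (only the first $m$ basis vectors occur, by construction), the identity $g_P(v_r)-w_0=\sum_{i=1}^m c_{r,i}\tilde b^i$ recasts the cube constraints as the linear system
\[w_0(P)=\proj_{H_0}(c_0)+O(\de),\qquad\sum_{i=1}^m c_{r,i}\,\tilde b^i(P)=\proj_{H_0}(c_r)-w_0(P)+O(\de)\quad(r=1,\dots,m).\]
The $m\times m$ matrix $C=(c_{r,i})$ satisfies $|\det C|=m!\,\hau^m(\De)$ and $\|C\|\lkb_k 1$ (since $|c_{r,i}|\leq|v_r-v_0|\leq\sqrt k$), hence $\sigma_{\min}(C)\geq|\det C|/\|C\|^{m-1}\gkb_{n,k}\hau^m(\De)$. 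Consequently the $(m+1)(n-k)$ ``constrained'' coordinates $w_0,\tilde b^1,\dots,\tilde b^m$ of every $P\in\cala$ lie within an $O(\de/\hau^m(\De))$-ball around values determined by $D_{j_0},\dots,D_{j_m}$ alone.

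The remaining $(k-m)(n-k)$ coordinates $\tilde b^{m+1}(P),\dots,\tilde b^k(P)$ are unconstrained by the cube condition and, by $P\in\calh$, lie in a bounded box of diameter $O_{n,k}(1)$. I take a standard $\de$-net of that box of cardinality $\lkb\de^{-(k-m)(n-k)}$, and for each net point near which $\cala$ is nonempty select one representative $C\in\cala$ to form $\calc$. Given an arbitrary $P\in\cala$, pick $C\in\calc$ with $|\tilde b^i(P)-\tilde b^i(C)|\leq\de$ for all $i>m$. Since $P$ and $C$ obey the same system up to $O(\de)$, the second paragraph gives $|w_0(P)-w_0(C)|\lkb\de$ and $|\tilde b^i(P)-\tilde b^i(C)|\lkb\de/\hau^m(\De)$ for $i\leq m$; transporting the displacement back through the bounded change of variables yields $\|x(P)-x(C)\|\lkb\de/\hau^m(\De)$, which is the desired bound.

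The principal obstacle is the quantitative singular-value lower bound $\sigma_{\min}(C)\gkb_{n,k}\hau^m(\De)$, which is the sole source of the factor $1/\hau^m(\De)$ in the net radius $\ep$; it rests on the identity $|\det C|=m!\,\hau^m(\De)$ together with the trivial Frobenius bound on $\|C\|$. Everything else is bookkeeping: verifying that the change of variables and its inverse have norms depending only on $n,k$, and counting the $\de$-net of the bounded free region of $\tilde b^{m+1},\dots,\tilde b^k$ to obtain the cardinality $\lkb\de^{-(k-m)(n-k)}$.
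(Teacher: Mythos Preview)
Your argument is correct and takes a genuinely different route from the paper's.

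The paper's proof first invokes Lemma~\ref{pontok} to augment $v_0,\dots,v_m$ by $k-m$ of the standard vertices $e_{i_1},\dots,e_{i_{k-m}}$ so that the resulting full $k$-simplex $\De'$ satisfies $\hau^k(\De')\gtrsim\hau^m(\De)$. The net $\calc$ is then built \emph{explicitly}: each $C\in\calc$ is the $k$-plane through the fixed centers $c_0,\dots,c_m$ and through points $(e_{i_\omega},u^\omega)$ with $u^\omega$ ranging over a $\de$-lattice in $[0,1]^{n-k}$. Closeness of an arbitrary $P\in\cala$ to some $C$ is obtained from Lemma~\ref{rigid}, which inverts the full $(k+1)\times(k+1)$ system by Cramer's rule, the determinant being $\approx\hau^k(\De')\gtrsim\hau^m(\De)$.

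You instead rotate the basis of $V_0$ so that the first $m$ directions span $\mathrm{aff}(v_0,\dots,v_m)-v_0$, solve only an $m\times m$ system for the constrained coordinates $w_0,\tilde b^1,\dots,\tilde b^m$ via the singular-value bound $\sigma_{\min}(C)\geq|\det C|/\|C\|^{m-1}\gtrsim\hau^m(\De)$, and $\de$-net the orthogonal ``free'' block $\tilde b^{m+1},\dots,\tilde b^k$ directly. This is more intrinsic: it never leaves the $m$-dimensional geometry of $\De$, and the singular-value inequality replaces the combinatorial step of Lemma~\ref{pontok}. The paper's version, in return, gives a completely explicit $\calc$ (planes through prescribed points), which makes $\calc\su\cala$ immediate without the representative-selection step you need. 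One minor point: the lemma states $|\calc|\approx\de^{-(k-m)(n-k)}$, whereas your construction only yields $|\calc|\lkb\de^{-(k-m)(n-k)}$ since some net cells may contain no element of $\cala$; this is harmless, as only the upper bound is ever used in \eqref{eqabe}.
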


\begin{remark}
\label{metric}
It is easy to check that there exists a positive constant $K'$ such that, for every $P,P' \in \calh$,
$\rho (P,P') \leq K' \cdot d(P,P'),$ where $d$ is defined in \eqref{eqdmet}.
Lemma \ref{codelinalg} implies that for any $P \in \calh$ with $D_{j_r} \cap P \neq \emptyset$ for all $r=0,\dots,m$,
 $$P \in \bigcup_{C \in \calc} B_{\rho}\left(C, \frac{K' \cdot K\de}{\hau^m(\De)}\right),$$
thus Lemma \ref{codelinalg} indeed implies Lemma \ref{metricnet}. 
\end{remark}

\begin{proof}
Recall that  $c_r$ denotes the center of the dyadic cube $D_{j_r}$, $v_r=\proj_0 c_r$ $(r=0,\dots,m)$, and 
$\De=\De(v_0,\dots,v_m)$. 
We define a collection of $k$-planes 
$\calc \su A(n,k)$ with $|\calc| \approx \de^{-(k-m)(n-k)}$ such that each  $C \in \calc$ contains $\{c_r \}_{r=0}^m$,  and
if any $k$-plane $P$ 
intersects $D_{j_r}$ for all $r=0,\dots,m$, then $P$ must be contained in the $\frac{K \de}{\hau^m(\De)}$-neighborhood of $C$ for some $C \in \calc$.

Put 
\begin{equation}
U=\de \cdot \mathbb{Z}^{n-k} \cap [0,1]^{n-k},
\label{equi}
\end{equation}
that is, $U$ is a $\de$-net for the usual metric in $[0,1]^{n-k}$.

We will define the $\ep$-net $\calc$ using copies of the $\de$-net $U$ contained in 
$e_i + (\{0\} \times [0,1]^{n-k})$ for some suitably chosen $e_i$'s. 

We will need the following easy lemma. 
\begin{lemma}
\label{pontok}
Let $e_0=0$, and $e_1,\dots,e_k$ denote the standard unit vectors of $\rr^k$. Let $1 \leq m \leq k$ be integer, fix $v_0,\dots,v_m \in [0,1]^k$, and assume that 
$\hau^m(\De(v_0,\dots,v_m))=a>0$. Then there are $i_1,\dots, i_{k-m} \in \{0,1,\dots,k\}$ such that  
$$\hau^k(\De(v_0,\dots,v_m,e_{i_1},\dots,e_{i_{k-m}})) \gkb_{k,m} a.$$
\end{lemma}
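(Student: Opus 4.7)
The plan is to express the $k$-volume of $\De(v_0, \dots, v_m, e_{i_1}, \dots, e_{i_{k-m}})$ as the product of $a$ with a $(k-m)\times(k-m)$ determinant supported on $H^{\perp}$, where $H$ denotes the $m$-plane spanned by $v_1 - v_0, \dots, v_m - v_0$, and then to choose the $e_{i_j}$'s via a Cauchy--Binet / covariance argument.

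Choosing coordinates adapted to the splitting $H \oplus H^{\perp}$, the $k \times k$ matrix with columns $v_1 - v_0, \dots, v_m - v_0, e_{i_1} - v_0, \dots, e_{i_{k-m}} - v_0$ becomes block upper triangular, because its first $m$ columns lie in $H$. Hence
\[
\hau^k\bigl(\De(v_0, \dots, v_m, e_{i_1}, \dots, e_{i_{k-m}})\bigr) = \frac{m!\,a}{k!}\cdot \bigl|\det\bigl(\pi_{H^{\perp}}(e_{i_j} - v_0)\bigr)_{j=1}^{k-m}\bigr|,
\]
where the determinant is computed in $H^{\perp}$, and it suffices to find $i_1, \dots, i_{k-m} \in \{0, 1, \dots, k\}$ making this second factor $\gkb_{k,m} 1$.

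Set $p_i = \pi_{H^{\perp}}(e_i - v_0) \in H^{\perp}$ for $i = 0, 1, \dots, k$ and let $M$ be the $(k-m)\times(k+1)$ matrix with columns $p_0, \dots, p_k$. By the Cauchy--Binet formula,
\[
\det(M M^{\top}) = \sum_{|I| = k-m} |\det(M_I)|^2,
\]
so a universal lower bound $\det(M M^{\top}) \gkb_{k,m} 1$ yields, by pigeonhole over the $\binom{k+1}{k-m}$ subsets $I$, an index set with the required minor. To estimate $\det(M M^{\top})$ I would set $\bar p = \tfrac{1}{k+1}\sum_i p_i$ and use the identity $M M^{\top} = C + (k+1)\bar p\bar p^{\top}$ with $C = \sum_i (p_i - \bar p)(p_i - \bar p)^{\top}$, so that $\det(M M^{\top}) \ge \det C$ by positivity of the rank-one correction.

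The heart of the argument is then an explicit lower bound on $\det C$ that is uniform in $H$ and in $v_0$. The $v_0$ dependence is annihilated by the centering, since $p_i - \bar p = \pi_{H^{\perp}}(e_i - \bar e)$ with $\bar e = \mathbf{1}/(k+1)$; therefore $C = \pi_{H^{\perp}}\, Q\, \pi_{H^{\perp}}^{\top}$, where $Q = \sum_{i=0}^k (e_i - \bar e)(e_i - \bar e)^{\top}$. A brief expansion yields $Q = I_k - \tfrac{1}{k+1}\mathbf{1}\mathbf{1}^{\top}$, so $C = I_{k-m} - \tfrac{1}{k+1} u u^{\top}$ with $u = \pi_{H^{\perp}}(\mathbf{1})$, and the matrix determinant lemma gives
\[
\det C = 1 - \frac{\|u\|^2}{k+1} \ge 1 - \frac{k}{k+1} = \frac{1}{k+1},
\]
using $\|u\| \le \|\mathbf{1}\| = \sqrt{k}$. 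I expect this to be the only genuinely subtle step: a priori $\det C$ could degenerate as $H$ varies over the Grassmannian $G(k, m)$, and what makes the bound clean and dimension-only is the fortunate fact that $Q$ is already a rank-one perturbation of the identity, so $C$ inherits that structure.
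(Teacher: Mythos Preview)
Your argument is correct and genuinely different from the paper's. The paper proceeds inductively: at each stage it observes that the unit normal $w$ to any hyperplane containing the current simplex has some coordinate $|w_i|\ge 1/\sqrt{k}$, so one of $e_0,e_i$ lies at distance $\ge 1/(2\sqrt{k})$ from that hyperplane; this picks the next $e_{i_j}$ and gains a factor $\gtrsim_k 1$ in the volume, and one repeats $k-m$ times. Your approach is a single-shot algebraic one: the block-triangular identity reduces the problem to finding a large $(k-m)$-minor of the matrix $M=(\pi_{H^\perp}(e_i-v_0))_{i=0}^k$, and Cauchy--Binet together with the exact computation $\det C=1-\|\pi_{H^\perp}(\mathbf 1)\|^2/(k+1)\ge 1/(k+1)$ supplies it. The paper's route is more elementary (no Cauchy--Binet, no matrix determinant lemma) and closer to the geometric picture used elsewhere in the paper; your route gives an explicit dimension-only constant in one stroke and isolates cleanly \emph{why} the bound is uniform over the Grassmannian, namely that the covariance $Q$ of $\{e_0,\dots,e_k\}$ is $I_k-\tfrac{1}{k+1}\mathbf 1\mathbf 1^\top$, a rank-one perturbation of the identity.
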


We postpone the proof of Lemma \ref{pontok} to Section \ref{pure}. 

Fix $i_1,\dots, i_{k-m}$ obtained from Lemma \ref{pontok} for the projections 
$v_0,\dots,v_m$ of the centers of the cubes $D_{j_0},\dots,D_{j_m}$. We can assume without loss of generality that $i_1=1,\dots,i_{k-m}=k-m$. 
Put $\De'=\De(v_0,\dots,v_{m},e_1,\dots,e_{k-m})$. Then by Lemma \ref{pontok},

\begin{equation}
\hau^k(\De') \gkb_{k,m} \hau^m(\De) >0,
\label{eqkm}
\end{equation}
where $\De=\De(v_0,\dots,v_m)$ defined in \eqref{eqadelta}.

Define $C_{u^1,\dots,u^{k-m}}$ to be the $k$-dimensional affine subspace containing 
$c_0,\dots,c_m$, as well as $(e_1,u^1),\dots, (e_{k-m},u^{k-m}) \in [0,1]^n$ for some $u^{\om} \in U$, $\om=1,\dots,k-m$, 
where $U$ is defined in \eqref{equi}. 
Put 
\begin{equation}
\mathcal{C}=\{C_{u^1,\dots,u^{k-m}}: u^{\om} \in U, \om=1,\dots,k-m\}.
\label{eqcalc}
\end{equation}
Then we have $|\calc| \approx (\frac{1}{\de^{n-k}})^{k-m}=\de^{-(k-m)(n-k)}$, and by definition, $\calc \su \cala$. 

We claim that $\calc$ is an $\ep$-net for the metric space $(\mathcal{A},d)$ with $\ep=\frac{K\de}{\hau^m(\De)}$ for some constant $K$ 
depending only on $n,k,\al$. 

 For an arbitrary $k$-plane $P \in \calh$, put the parametrization 
\begin{equation}
P=\{(t,g_P(t)): t=(t_1,\dots,t_k) \in \rr^k \},
\label{eqplane}
\end{equation} 
where
\begin{equation}
\label{eqgp}
g_P(t)=a^0(P)+t_1b^1(P)+\dots+t_kb^k(P).
\end{equation}
Note that $(a^0(P),b^1(P),\dots,b^k(P))$ is precisely the code point of $P$ defined at the beginning of this section. 

We will need the following geometrical lemma. For an arbitrary vector $x \in \rr^{n-k}$, let $x_j$ denote its $j$'th coordinate, $j=1,\dots,n-k$.

\begin{lemma}
\label{rigid}
Let $s^0,\dots,s^k \in [0,1]^k$, $0 < \de < 1$, and $P,Q \in \calh$, where $\calh$ is defined in \eqref{hor}, such that  
$$\hau^k(\Om)>0, \ \text{where} \ \Om=\De(s^0,\dots,s^k),$$ 
\begin{equation}
|p^i_j-q^i_j| \lkb_{n,k} \de \ (j=1,\dots,n-k, i=0,\dots,k),
\label{eqpqdel}
\end{equation}
where $p^i=g_P(s^i), q^i=g_Q(s^i),$ and $g_P, g_Q$ denote the parametrization of $P, Q$ from \eqref{eqgp}, respectively. 
 
Then 
$$| a_j^0(P)-a_j^0(Q) |, |b_j^i(P)-b_j^i(Q) | \lkb_{n,k} \frac{\de}{\hau^k(\Om)} \ (j=1,\dots,n-k, \ i=1,\dots,k),$$
where $a^0(P),a^0(Q), b^1(P),b^1(Q),\dots,b^k(P),b^k(Q)$ are the coefficients from \eqref{eqgp}.
\end{lemma}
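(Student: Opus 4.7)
The plan is to reduce the lemma to a stability statement for a linear system solved coordinate by coordinate, and to apply Cramer's rule.

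Fix a coordinate index $j \in \{1,\dots,n-k\}$. From \eqref{eqgp} the condition $g_P(s^i)_j = p^i_j$ for $i=0,1,\dots,k$ reads
\begin{equation*}
a^0_j(P) + s^i_1 b^1_j(P) + \dots + s^i_k b^k_j(P) = p^i_j \qquad (i=0,1,\dots,k),
\end{equation*}
which I would write as $M \, \xi^P_j = p_j$, where $M$ is the $(k+1)\times(k+1)$ matrix whose $i$-th row is $(1, s^i_1, \dots, s^i_k)$ and $\xi^P_j = (a^0_j(P), b^1_j(P), \dots, b^k_j(P))^T$, $p_j = (p^0_j, \dots, p^k_j)^T$. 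The same system with $Q$ in place of $P$ yields $M \xi^Q_j = q_j$. Subtracting,
\begin{equation*}
M (\xi^P_j - \xi^Q_j) = p_j - q_j,
\end{equation*}
and by hypothesis \eqref{eqpqdel} the right-hand side has entries bounded by $C_{n,k}\,\delta$.

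The key observation is the classical formula
\begin{equation*}
|\det M| = k!\,\hau^k(\Omega),
\end{equation*}
so in particular $M$ is invertible since $\hau^k(\Omega)>0$. I would now apply Cramer's rule: each component of $\xi^P_j - \xi^Q_j$ equals $\det M'/\det M$, where $M'$ is obtained from $M$ by replacing one column with the vector $p_j - q_j$. Since the remaining columns of $M'$ have entries in $[0,1]$ (because $s^i \in [0,1]^k$) and the replaced column has entries of size $\lesssim_{n,k} \delta$, expansion along the replaced column (or Hadamard's inequality) gives $|\det M'| \lesssim_{n,k} \delta$. Therefore
\begin{equation*}
|a^0_j(P) - a^0_j(Q)|, \; |b^i_j(P) - b^i_j(Q)| \; \lesssim_{n,k} \; \frac{\delta}{k!\,\hau^k(\Omega)} \; \lesssim_{n,k} \; \frac{\delta}{\hau^k(\Omega)},
\end{equation*}
which is the desired estimate.

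There is essentially no obstacle: the whole content of the lemma is the computation $|\det M| = k!\,\hau^k(\Omega)$ combined with the trivial bound $|\det M'| \lesssim_{n,k} \delta$ from the size of one column. The geometric meaning is the familiar fact that recovering an affine map from $k+1$ sample points is well-conditioned precisely when those samples form a simplex of large volume, and this is made quantitative by Cramer's rule. The only care needed is bookkeeping: doing the argument separately for each coordinate $j=1,\dots,n-k$ of the $\R^{n-k}$-valued affine map $g_P$, and noting that all constants appearing depend only on $n,k$.
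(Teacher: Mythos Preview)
Your proof is correct and takes essentially the same approach as the paper: both set up the linear system $M\xi_j=p_j$ coordinate by coordinate, identify $|\det M|\approx_k \hau^k(\Omega)$, and apply Cramer's rule together with the trivial bound on the cofactor determinants coming from $s^i\in[0,1]^k$. Your version is marginally cleaner in that you subtract the two systems first and apply Cramer's rule to the difference, whereas the paper applies Cramer's rule to each system separately and then subtracts, but this is a cosmetic difference only.
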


We postpone the proof of Lemma \ref{rigid} to Section \ref{pure}. 

Let $c_r=(v_r,w^r) \in [0,1]^k \times [0,1]^{n-k}$ denote the center of $D_{j_r}$ for $r=0,\dots,m$.
For $i=0,\dots,m$, put $s^i=v_i$, and for $i=m+1,\dots,k$, put 
$s^i=e_{i-m}$. 
Fix $P \in \cala$, and put 
\begin{equation}
z^r(P)=g_P(v_r) \in [0,1]^{n-k}, \ r=0,\dots,m, 
\label{eqzp}
\end{equation}
where $g_P$ is defined in \eqref{eqgp}. 
It is easy to see by the definition of $\cala$, see \eqref{eqcala}, that 
\begin{equation}
\ |z^r_{j} (P)- w^r_{j} | \lkb_{n,k} \de \ (j=1,\dots,n-k).
\label{eqdelta}
\end{equation}

Put
\begin{equation}
h^{\om}(P)=g_P(e_{\om}) \in [0,1]^{n-k}, \ {\om}=1,\dots,k-m, 
\label{eqxdef}
\end{equation}  
where $g_P$ is defined in \eqref{eqgp}. 
By the definition of $U$ \eqref{equi}, for any $P \in \cala$ we can choose $C=C_{u^1,\dots,u^{k-m}} \in \calc$ such that 
\begin{equation}
|h^{\om}_{j}(P)-u^{\om}_{j}| \leq \de \ (\om=1,\dots,k-m, \ j=1,\dots,n-k),
\label{eqxu}
\end{equation} 
where  $u^{\om} \in U$. 

We will apply Lemma \ref{rigid} for $s^0,\dots,s^k$, $P \in \cala$, and the above $C \in \calc$ in place of $Q$. 
Note that using the notations from Lemma \ref{rigid} and from \eqref{eqzp}, \eqref{eqxdef}, we have 
\begin{align*}
p^i=z^i(P), \ q^i=w^i \ &\text{for} \ i=0,\dots,m, \ \text{and} \\ 
p^i=h^{i-m}(P), \ q^i=u^{i-m} \ &\text{for} \ i=m+1,\dots,k. 
\end{align*} 
Clearly, using the notations from Lemma \ref{rigid}, \eqref{eqdelta} means that 
$$|p^i_j-q^i_j| \lkb_{n,k} \de \ (j=1,\dots,n-k) \ \text{for} \ i=0,\dots,m,$$ and 
\eqref{eqxu} means that 
$$|p^i_j-q^i_j| \leq \de \ (j=1,\dots,n-k) \ \text{for} \ i=m+1,\dots,k.$$ 
By \eqref{eqkm}, we also have $\hau^k(\Om)>0$, where 
$$\Om=\De(s^0,\dots,s^k)=\De(v_0,\dots,v_{m},e_1,\dots,e_{k-m})=\De'.$$

We checked that the conditions in Lemma \ref{rigid} are satisfied, so the lemma can be applied. 
We obtain that 
\begin{equation}
| a_j^0(P)-a_j^0(C) |, |b_j^i(P)-b_j^i(C) | \lkb_{n,k} \frac{\de}{\hau^k(\De')} \ (j=1,\dots,n-k, \ i=1,\dots,k),
\label{eqconc}
\end{equation}
where $a^0(P),b^1(P),\dots,b^k(P)$ are the coefficients from \eqref{eqgp}, and similarly, \linebreak
$a^0(C),b^1(C),\dots,b^k(C)$ are the coefficients from the parametrization  
$$C=\{(t,g_C(t)): t=(t_1,\dots,t_k) \in \rr^k \}, \ g_C(t)=a^0(C)+t_1b^1(C)+\dots+t_kb^k(C).$$
By \eqref{eqconc} and \eqref{eqkm}, we have 
$$| a_j^0(P)-a_j^0(C) |, |b_j^i(P)-b_j^i(C) | \lkb_{n,k,\al} \frac{\de}{\hau^m(\De)} \ (j=1,\dots,n-k, \ i=1,\dots,k).$$

By the definition of $\| \cdot \|$, this is equivalent to  
 $$x(P) \in B_{\| \cdot \|}\left(x(C), \frac{K\de}{\hau^m(\De)}\right),$$
where $K$ is a constant depending only on $n,k,\al$.
Thus we are done with the proof of Lemma \ref{codelinalg} (subject to Lemma \ref{pontok}, and \ref{rigid}), and so with the proof of Lemma \ref{metricnet} as well. 

\end{proof}

\section{The proofs of some purely geometrical lemmas}
\label{pure}

\begin{proof}[The proof of Lemma \ref{simplexkp}]
\hfill\\
First we prove that if there are $y_0 \in D_0,\dots,y_m \in D_m$ such that 
\begin{equation}
\hau^m(\De(y_0,\dots,y_m)) \geq f(x),
\label{eqy0}
\end{equation}
 then
\begin{equation}
 \forall \  z_0 \in D_0, \ \hau^m(\De(z_0,y_1,\dots,y_m)) \geq \frac{f(x)}{2}.
\label{eqz0}
\end{equation}

Let $H$ denote the $(m-1)$-dimensional affine subspace containing $y_1,\dots,y_m$. Clearly, 
$$\hau^m(\De(y_0,\dots,y_m))=\frac{\hau^{m-1}(\De(y_1,\dots,y_m)) \cdot d(y_0,H)}{m},$$
which implies by $\De(y_1,\dots,y_m) \su [0,1]^k$, $\hau^{m-1}(\De(y_1,\dots,y_m)) \leq \sqrt{k}$, and \eqref{eqy0} that 
\begin{equation}
d(y_0,H) \geq f(x) \frac{m}{\sqrt{k}}.
\label{eqtrivi}
\end{equation}
We also have 
$$\hau^m(\De(z_0,y_1,\dots,y_m))=\frac{\hau^{m-1}(\De(y_1,\dots,y_m)) \cdot d(z_0,H)}{m}.$$ 
Since $z_0, y_0 \in D_0$, 
$$d(z_0,H) \geq d(y_0,H)-\sqrt{k}\cdot x.$$
We claim that $d(y_0,H)-\sqrt{k}\cdot x \geq d(z_0,H)/2$. Indeed, 
by $x \leq \frac{m}{2^{m+1}k}  \cdot f(x) \leq \frac{m}{2k} \cdot f(x)$ and \eqref{eqtrivi}, we obtain 
$$\sqrt{k} \cdot x \leq f(x) \frac{m }{2 \sqrt{k}} \leq \frac{d(y_0,H)}{2},$$
so the claim is verified, and $d(z_0,H) \geq \frac{d(y_0,H)}{2}$. 
Then clearly, 
$$\hau^m(\De(z_0,y_1,\dots,y_m)) \geq \frac{\hau^{m-1}(\De(y_1,\dots,y_m)) \cdot d(y_0,H)}{2m} \geq \frac{f(x)}{2}$$ 
and we are done with the first step of the proof.  
Note that the role of the cube in the above proof is independent of the choice $0 \in \{0,\dots,m\}$. 

Let $z_1 \in D_1$. We repeat the first step using \eqref{eqz0} in place of \eqref{eqy0}. We use that
 $x \leq \frac{m}{2^{m+1}k}  \cdot f(x) \leq \frac{m}{4k} \cdot f(x)$, and we obtain by exactly the same argument as above that
$d(z_1,H) \geq \frac{d(y_1,H)}{2}$, where $H$ denotes the $(m-1)$-dimensional affine subspace containing $z_0,y_2,\dots,y_m$. 
Thus 
$$\hau^m(\De(z_0,z_1,y_2\dots,y_m)) \geq \frac{\hau^{m-1}(\De(z_0,y_2,\dots,y_m)) \cdot d(y_1,H)}{2m} \geq \frac{f(x)}{4}.$$ 
We continue the process, choosing each $z_i \in D_i$, $i \in \{0,1,\dots,m\}$ after another, using induction, 
$$\hau^m(\De(z_0,\dots,z_{i-1},y_i,\dots,y_m)) \geq \frac{f(x)}{2^i},$$ 
and $x \leq \frac{m}{2^{m+1}k}  \cdot f(x) \leq \frac{m}{2^{i+1}k} \cdot f(x)$ to obtain that 
$$\hau^m(\De(z_0,\dots,z_{i-1},z_i,y_{i+1},\dots,y_m)) \geq \frac{f(x)}{2^{i+1}}.$$
For $i=m$ we obtain that for any $z_0 \in D_0,\dots, z_m \in D_m$, 
$$\hau^m(\De(z_0,\dots,z_m)) \geq \frac{f(x)}{2^{m+1}},$$
and we are done. 
\end{proof}

\begin{proof}[The proof of Lemma \ref{pontok}]
\hfill\\
Assume first that $m=k-1$. 
Let $Q$ denote the $m$-dimensional hyperplane containing $v_0,\dots,v_m$, and let $W_0$ be the hyperplane parallel to $Q$ going through the origin. Put $W_i=W_0 + e_i$ for $i=0,1,\dots,k$. 
Let $w$ denote the unit vector generating the orthogonal complement of $W_0$. Clearly, we have 
$$d(W_0,W_i)=|\langle e_i,w \rangle|=|w_i|,$$ 
where $\langle \cdot,\cdot \rangle$ denotes the standard scalar product in $\rr^k$. 
Moreover, since $w$ is a unit vector, there exists $i \in \{1,\dots,k\}$ such that $d(W_0,W_i)=|w_i| \geq \frac{1}{\sqrt{k}}$. Since $Q$ is parallel to $W_i$ for all $i$, this also implies that there exists $j \in \{0,i\}$ such that $d(Q,e_j)=d(Q,W_j) \geq \frac{1}{2\sqrt{k}}$. 

Take $e_{i_1}=e_j$, then clearly, $\hau^k(\De(v_0,\dots,v_m,e_{i_1})) \gkb_k a$ and we are done. 

If $m < k-1$, let $P$ denote the $m$-dimensional affine subspace containing $v_0,\dots,v_m$, take an arbitrary hyperplane $Q$ containing $P$, and repeat the process described above for $Q$. This yields an $e_{i_1}$ with $d(P,e_{i_1}) \geq d(Q,e_{i_1}) \geq \frac{1}{2\sqrt{k}}$, so clearly, 
$\hau^{m+1}(\De(v_0,\dots,v_m,e_{i_1})) \gkb_k a$. Then we repeat the process starting with $v_0,\dots,v_m,$ $e_{i_1}$ to obtain a good $e_{i_2}$, and so on. Clearly, the process yields $e_{i_1},\dots,e_{i_{k-m}}$ with $\hau^k(\De(v_0,\dots,v_m,e_{i_1},\dots,e_{i_{k-m}})) \gkb_{k,m} a$, so we are done. 
\end{proof}

\begin{proof}[The proof of Lemma \ref{rigid}]
\hfill\\
First we reformulate the equations $p^i=g_P(s^i)$, $q^i=g_Q(s^i)$ as matrix equations. Recall that for 
$x \in \rr^{n-k}$,  $x_j$ denotes its $j$'th coordinate $(j=1,\dots,n-k)$. We have 

\begin{equation}
\label{mm}
M \cdot y_j(P)=p_j, \ M \cdot y_j(Q)=q_j \ (j=1,\dots,n-k),
\end{equation}
where
\begin{equation}
M=
\begin{bmatrix}
1 & s_1^0 & \dots & s_k^0 \\ 
1 & s_1^1 & \dots & s_k^1 \\ 
\vdots & \vdots & \ddots & \vdots \\
1 & s_1^k & \dots & s_k^k \\ 
\end{bmatrix}
, \ y_j(P)= 
\begin{pmatrix}
a_j^0(P) \\ 
b_j^1(P) \\
 \vdots \\
b_j^k(P)
\end{pmatrix}
, \ y_j(Q)=
\begin{pmatrix}
a_j^0(Q) \\ 
b_j^1(Q) \\
 \vdots \\
b_j^k(Q)
\end{pmatrix},
\label{matrixdef}
\end{equation}
and
$$p_j=
\begin{pmatrix}
p_j^0 \\ 
p_j^1 \\
 \vdots \\
p_j^k
\end{pmatrix}, \ 
q_j=
\begin{pmatrix}
q_j^0 \\ 
q_j^1 \\
 \vdots \\
q_j^k
\end{pmatrix}.
$$

It is easy to see that 
\begin{equation}
\det (M) \approx_{k} \hau^k(\De(s^0,\dots,s^k))=\hau^k(\Om) >0.
\label{eqdet}
\end{equation}

Then we obtain using $\det M >0$ and Cramer's rule that

\begin{equation}
\label{cramer}
a_j^0(P)= \frac{\det M (1| p_j)}{\det M}, b_j^i(P)= \frac{\det M (i+1| p_j)}{\det M} (i=1,\dots,k), 
\end{equation}
where $M (i| p_j)$ denotes the matrix formed by replacing the $i$'th column of $M$ by the column vector $p_j$. 
Similarly, we obtain the analog formulas for $a_j^0(Q)$ and $b_j^i(Q)$ $(i=1,\dots,k)$. 

It follows easily with cofactor expansion, using \eqref{cramer}, and that $s^i \in [0,1]^k \ (i=0,\dots,k)$, that 
\begin{align*}
\label{eqedelta}
| a_j^0(P)-a_j^0(Q) | & = \left| \frac{\det M (1| p_j) - \det M (1| q_j)}{\det M} \right| \lkb_k  \frac{ \sum_{i=0}^{k} | p_j^i-q_j^i |}{\det M}.
\end{align*}
Then we use \eqref{eqpqdel} and \eqref{eqdet} to obtain that 
\begin{align*}
| a_j^0(P)-a_j^0(Q) |  \lkb_k \frac{ \sum_{i=0}^{k} | p_j^i-q_j^i |}{\det M} \lkb_{n,k} \frac{ \de}{\det M} \lkb_{n,k} \frac{\de}{\hau^k(\Om)}
\end{align*}
and similarly 
$$| b_j^i(P)-b_j^i(Q) | \lkb_{n,k} \frac{\de}{\hau^k(\Om)}$$
for each $j=1,\dots,n-k, i=1,\dots,k$, and we are done.  
\end{proof}

\section*{Acknowledgment}
The author is grateful to Tam\'as Keleti for the inspiring conversations, and for his careful reading and useful suggestions. 
The author would also like to thank Izabella {\L}aba for the helpful discussions which motivated the investigation 
of $(\al,k,s)$-Furstenberg sets for $0 <\al \leq k-1$ in addition to $k-1 < \al \leq k$, and 
Tam\'as Terpai for his contribution to Lemma \ref{terp}. 
The author thanks the anonymous referees for their valuable comments.

\end{document}